\newtheorem{thm}{Theorem}[subsection]
\newtheorem{cor}[thm]{Corollary}
\newtheorem{lem}[thm]{Lemma}
\newtheorem{prop}[thm]{Proposition}
\theoremstyle{definition}
\newtheorem{defn}[thm]{Definition}
\theoremstyle{remark}
\newtheorem{rem}[thm]{Remark}
\numberwithin{equation}{subsection}
\numberwithin{figure}{subsection}
\newcommand{\diff}{\mathrm{d}}
\newcommand{\C}{{\mathbb C}}
\newcommand{\R}{{\mathbb R}}
\newcommand{\D}{{\mathbb D}}
\newcommand{\expect}{{\mathbb E}}
\newcommand{\Te}{{\mathbb T}}
\newcommand{\Z}{{\mathbb Z}}
\newcommand{\imag}{\mathrm{i}}
\newcommand{\e}{\mathrm{e}}
\newcommand\calD{\mathcal{D}}
\newcommand\calH{\mathcal{H}}
\newcommand\calX{\mathcal{X}}
\newcommand\calB{\mathcal{B}}
\newcommand\Gspace{\mathfrak{G}}
\newcommand\Hspace{\mathfrak{H}}
\newcommand{\Xspace}{\mathfrak{A}}
\newcommand{\Yspace}{\mathfrak{B}}
\newcommand{\Nspace}{\mathfrak{N}}
\newcommand{\Mspace}{\mathfrak{M}}
\newcommand{\Pop}{{\mathbf P}}
\newcommand{\boldG}{{\boldsymbol\Gamma}}
\newcommand{\Qop}{{\mathbf Q}}
\newcommand{\Mop}{{\mathbf M}}
\newcommand{\Lop}{{\mathbf L}}
\newcommand{\Tope}{{\mathbf T}}
\newcommand{\Uop}{{\mathbf U}}
\newcommand{\Sope}{{\mathbf S}}
\newcommand{\Vop}{{\mathbf V}}
\newcommand{\Ordo}{\mathrm{O}}
\newcommand{\Sop}{\mathbf{S}}
\newcommand{\Zsymb}{\mathcal{P}}
\newcommand{\Wsymb}{\mathcal{Q}}
\newcommand{\Dop}{\mathbf{D}}
\newcommand{\Iop}{\mathbf{I}}
\newcommand{\Aop}{\mathbf{A}}
\newcommand\Rop{\mathbf{R}}
\newcommand{\Bop}{\mathbf{B}}
\newcommand{\kernel}{\mathrm k}
\newcommand{\balpha}{\boldsymbol\alpha}
\newcommand{\szeg}{\mathrm s}
\DeclareMathOperator{\re}{Re}
\begin{document}

%---------------------------------------------------------------------
%Insert here the title, affiliations and abstract:
%
\title{Gaussian analytic functions and operator symbols of Dirichlet type}

\author{Haakan Hedenmalm}
\address{
Hedenmalm: Department of Mathematics\\
KTH Royal Institute of Technology\\
S--10044 Stockholm\\
Sweden}

\email{haakanh@math.kth.se}

\author{Serguei Shimorin}
\address{
Shimorin: Department of Mathematics\\
KTH Royal Institute of Technology\\
S--10044 Stockholm\\
Sweden}

\email{shimorin@math.kth.se}

\subjclass[2000]{Primary 30C40, 30B20, 30C55, 60G55}
\keywords{}
 
\thanks{This research was supported by Vetenskapsr\aa{}det (VR)}
 
\begin{abstract} 

Let $\calH$ be a separable infinite-dimensional $\C$-linear Hilbert space,
with sesquilinear inner product $\langle\cdot,\cdot\rangle_\calH$.
Given any two orthonormal systems $x_1,x_2,x_3,\ldots$ and 
$y_1,y_2,y_3,\ldots$ in $\calH$, we show that
\[
\sum_{l=2}^{+\infty}s^{l}\bigg|
\sum_{j,k:j+k=l}(jk)^{-\frac12}\langle x_j,y_k\rangle_{\calH}\bigg|^2
\le 2s\log\frac{\e^{1/2}}{1-s},\qquad0\le s<1 .
\leqno{\text(1)}
\]
In terms of the weighted sums
\[
S(l):=\sum_{j,k:j+k=l}\bigg(\frac{l}{jk}\bigg)^{\frac12}\,
\langle x_j,y_k\rangle_{\calH},
\]
this means that
\[
\sum_{l=2}^{+\infty}\frac{s^{l}}{l}|S(l)|^2\le 
2\log\frac{\e^{1/2}}{1-s},\qquad0\le s<1. 
\]
Expressed more vaguely, $|S(l)|^2\lessapprox2$ holds in the sense of averages.
Concerning the optimality of the bound (1), a construction due to Zachary 
Chase shows that the statement does not hold if the number $2$ is replaced 
by the smaller number $1.72$. In the construction, the 
system $y_1,y_2,y_3,\ldots$ is a permutation of the system $x_1,x_2,x_3,\ldots$. 
We interpret our bound in terms of the correlation $\expect \Phi(z)\Psi(z)$
of two copies of a Gaussian analytic function with possibly intricate Gaussian
correlation structure between them. The Gaussian analytic function we study
arises in connection with the classical Dirichlet space, which is naturally 
M\"obius invariant. The study of the correlations 
$\expect\Phi(z)\Psi(z)$ 
leads us to introduce a new space, the \emph{mock-Bloch space},
which is slightly bigger than the standard Bloch space. Our bound has
an interpretation in terms of McMullen's asymptotic variance, originally 
considered for functions in the Bloch space. Finally, we show that the 
correlations $\expect\Phi(z)\Psi(w)$ may be expressed as Dirichlet symbols of
contractions on $L^2(\D)$, and show that the Dirichlet symbols of Grunsky
operators associated with univalent functions find a natural characterization
in terms of a nonlinear wave equation.   
\end{abstract}

\maketitle

\section{Introduction} 

\subsection{Basic notation in the plane}
\label{subsec-1.1}
We write $\Z$ for the integers, $\Z_+$ for the positive integers, $\R$ 
for the real line, and $\C$ for the complex plane. Moreover, we write 
$\C_\infty:=\C\cup\{\infty\}$ for the extended complex plane 
(the Riemann sphere). For a complex variable $z=x+\imag y\in\C$, let 
\[
\diff s(z):=\frac{|\diff z|}{2\pi},\qquad
\diff A(z):=\frac{\diff x\diff y}{\pi},
\]
denote the normalized arc length and area measures, as indicated. 
Moreover, we shall write 
\[
\varDelta_z:=\frac{1}{4}\bigg(\frac{\partial^2}{\partial x^2}+
\frac{\partial^2}{\partial y^2}\bigg)
\]
for the normalized Laplacian, and
\[
\partial_z:=\frac{1}{2}\bigg(\frac{\partial}{\partial x}-\imag
\frac{\partial}{\partial y}\bigg),\qquad
\bar\partial_z:=\frac{1}{2}\bigg(\frac{\partial}{\partial x}+\imag
\frac{\partial}{\partial y}\bigg),
\]
for the standard complex derivatives; then $\varDelta$ factors as
$\varDelta_z=\partial_z\bar\partial_z$. Often we will drop the subscript 
for these differential operators when it is obvious from the context with
respect to which variable they apply.
We let $\D$ denote the open unit disk, $\Te:=\partial\D$ the unit circle, 
and $\D_e$ the exterior disk:
\[
\D:=\{z\in\C:\,\,|z|<1\},\qquad \D_e:=\{z\in\C_\infty:\,\,|z|>1\}.
\]
We will find it useful to introduce the sesquilinear forms 
$\langle\cdot,\cdot \rangle_\C$ and $\langle\cdot,\cdot \rangle_\D$,
%$\langle\cdot,\cdot \rangle_\Te$ and $\langle\cdot,\cdot \rangle_\D$,
as given by
\[
\langle f,g \rangle_\C:=\int_\C f(z)\bar g(z)\diff A(z),\qquad
\langle f,g\rangle_\D:=\int_\D f(z)\bar g(z)\diff A(z),
\] 
where we need $f\bar g\in L^1(\C)$ in the first instance and 
$f\bar g\in L^1(\D)$ in the second. These are standard Lebesgue spaces 
%, in the first case, $f\bar g\in L^1(\Te)$ is required, and in the
%second, we need that  $f\bar g\in L^1(\D)$. Here, for $0<p\le+\infty$, 
%$L^p(\D)$ is the usual Lebesgue space of equivalence classes of Lebesgue 
%measurable functions 
with respect to normalized area measure $\diff A$. 
Here, generally, for a given complex-valued function $f$, we denote by 
$\bar f$ the function whose values are the complex conjugates of $f$.
To simplify the notation further, we write
\[
\langle f\rangle_\C=\langle f,1\rangle_\C,\quad
\langle f\rangle_\D=\langle f,1\rangle_\D.
\]
As for operators $\Tope$ on a Hilbert function space, we let $\Tope^*$ denote
the adjoint, while $\bar\Tope$ means the operator defined by
\[
\bar\Tope f=\overline{\Tope\bar f}.
\]

%At times we use the notation $1_E$ for the characteristic function of a 
%subset $E$, which equals $1$ on $E$ and vanishes off $E$.

%As for distribution theory, a locally area-summable function $u$ will be 
%identified with the distribution acting on a test function $\varphi$
%according to
%\[
%u(\varphi)=\int_\C f\varphi \diff A.
%\]
%The normalization in the area element $\diff A$ is the reason why, e. g., 
%$\hDelta\log|z|$ equals $\frac12$ times the unit point mass at the origin 
%(and not $\frac{\pi}{2}$ times as would be the case with the standard
%area element).

\subsection{Complex Gaussian Hilbert space}
\label{subsec-CGHS}

A \emph{Gaussian Hilbert space} is a closed linear subspace $\Gspace$ of 
$L^2(\Omega)=L^2(\Omega,\diff P)$, where $(\Omega,\diff P)$ is a probability
space with a given $\sigma$-algebra, with the property that each element
$\gamma\in\Gspace$ has a Gaussian distribution with mean $0$. Since we will
be working with the complex field $\C$, this means that the real and imaginary
parts of $\gamma$ are jointly Gaussian, and that the mean is $0$ of each one.
Here, the \emph{expectation} (or \emph{mean}) operation $\expect$ is just
given by $\expect \gamma:=\langle\gamma\rangle_\Omega=\int_\Omega\gamma\diff P$.
We say that $\gamma$ is \emph{symmetric} if $\expect(\gamma^2)=0$. Moreover,
$\gamma$ is a \emph{standard complex Gaussian} variable if it has mean $0$, 
is symmetric and has $\expect(|\gamma|^2)=1$.
In other words, the values of $\gamma$ are distributed according to the 
density $\e^{-|z|^2}\diff A(z)$ in the plane. 
We will assume for convenience that $\Gspace$ is \emph{conjugation-invariant}, 
that is, $\gamma\in\Gspace\Longleftrightarrow\bar\gamma\in\Gspace$. We refer
to \cite{Janson} for an exposition on Gaussian Hilbert spaces.
We will write $\langle\gamma,\gamma'\rangle_\Omega
=\langle\gamma\bar\gamma'\rangle_\Omega=\expect \gamma\bar\gamma'$
for the inner product of $\Gspace$. We shall need the following observation. 
If $\Gspace$ is separable and infinite-dimensional, then there exists a 
sequence $\gamma_1,\gamma_2,\gamma_3,\ldots$ in $\Gspace$ consisting of 
i i d standard complex Gaussians, such that the sequence 
$\gamma_1,\bar\gamma_1,\gamma_2,\bar\gamma_2,\ldots$ forms an orthonormal basis
in $\Gspace$. In particular, $\Gspace$ splits as an orthogonal sum
$\Gspace=\Hspace\oplus\Hspace_*$,  where $\Hspace$ is the closed 
subspace spanned by $\gamma_1,\gamma_2,\gamma_3,\ldots$,  while $\Hspace_*$
is spanned by $\bar\gamma_1,\bar\gamma_2,\bar\gamma_3,\ldots$.

\subsection{Gaussian analytic functions associated with the 
Dirichlet space}
We now outline a more direct approach to the analytic part of GFF outlined
in the preceding subsection. 
Let $A^2(\D)$ denote the subspace of $L^2(\D)$ consisting of the holomorphic
functions, which is a closed subspace and hence a Hilbert space in its own
right, known as the \emph{Bergman space}.
The \emph{Dirichlet space} is the space $\calD(\D)$ of analytic functions $f$ 
with $f'\in A^2(\D)$, equipped with the Dirichlet inner product
\[
\langle f,g\rangle_\nabla:=\langle f',g'\rangle_\D,
%\quad
%\|f\|_\nabla^2:=\|f'\|^2_{A^2(\D)}=\langle f',f'\rangle_\D.
\]
The importance of the Dirichlet space comes from its conformal invariance 
property. For instance, if $\phi$ is a M\"obius automorphism of the unit disk
$\D$, we have that
\[
\langle f\circ\phi,g\circ\phi\rangle_\nabla=\langle f,g\rangle_\nabla.
\]
The Dirichlet inner product gives rise to a seminorm
\[
\|f\|_\nabla^2:=\|f'\|^2_{A^2(\D)}=\langle f',f'\rangle_\D,
\]
which vanishes on the constant functions. So, to make it a norm, we
could add the requirement that the functions should vanish at a given point
$\lambda\in\D$:
\[
\calD_\lambda(\D):=\{f\in\calD(\D):\,f(\lambda)=0\}. 
\]
We will focus our attention to $\lambda=0$, and study the space $\calD_0(\D)$.
By the M\"obius invariance of the seminorm, this choice is not restrictive 
as we may easily move any other point $\lambda$ to the origin using a 
M\"obius automorphism.

In recent years, \emph{Gaussian analytic functions} has received increasing
attention. For instance, see \cite{Sodin} and the book \cite{HKPV}. 
In the space $\calD_0(\D)$, we have a canonical orthogonal basis
\[
e_j(z):=j^{-\frac12}z^j,\qquad j=1,2,3,\ldots,
\]   
and we form a $\calD_0$-Gaussian analytic function ($\calD_0$-GAF)
\begin{equation}  
\Phi(z):=\sum_{j=1}^{+\infty}\alpha_j\,e_j(z)=
\sum_{j=1}^{+\infty}\frac{\alpha_j}{\sqrt{j}}\,z^j,
\label{eq-Gauss1}
\end{equation}
where the $\alpha_j$ are i i d (independent identically distributed) standard
complex Gaussian variables, taken from a Gaussian Hilbert space $\Gspace$. 
Then for two points in the disk $z,w\in\D$, we have the complex correlation 
structure
\begin{equation}
\expect(\Phi(z)\Phi(w))=0, \quad \expect(\Phi(z)\bar\Phi(w))=
\log\frac{1}{1-z\bar w}. 
\label{eq-Gauss2}
\end{equation}
Since Gaussian random variables are determined by their correlation structures, 
we may, depending on the point of view, take \eqref{eq-Gauss2} 
as the defining property instead of the more explicit \eqref{eq-Gauss1}.
On the right-hand side of \eqref{eq-Gauss2}, we recognize the reproducing
kernel for the Dirichlet space,
\begin{equation}
\kernel_{\calD_0}(z,w)=\log\frac{1}{1-z\bar w},
\end{equation}
with the point evaluation property
\[
f(w)=\langle f,\kernel_{\calD_0}(\cdot,w)\rangle_\nabla,\qquad f\in \calD_0(\D).
\]
It is appropriate to think of the correlation structure \eqref{eq-Gauss2} in
terms of the matrix-valued correlation structure
\begin{equation}
\mathbb{k}_{2\times2}[\Phi](z,w)=\expect
\begin{pmatrix}
\Phi(z)\\
\bar \Phi(z)
\end{pmatrix}
\begin{pmatrix}
\bar\Phi(w) & \Phi(w)
\end{pmatrix}=\begin{pmatrix}
\expect \Phi(z)\bar\Phi(w) & 
\expect\Phi(z)\Phi(w)\\
\expect \bar\Phi(z)\bar\Phi(w) & 
\expect\bar\Phi(z)\Phi(w)
\end{pmatrix}
=
\begin{pmatrix}
\log\frac{1}{1-z\bar w} & 0\\
0 & 
\log\frac{1}{1-\bar z w}
\end{pmatrix},
\label{eq-matrixcorr1}
\end{equation}
and the associated $4\times4$ matrix 
\begin{equation}
\begin{pmatrix}
\mathbb{k}_{2\times2}[\Phi](z,z)&\mathbb{k}_{2\times2}[\Phi](z,w)
\\
\mathbb{k}_{2\times2}[\Phi](z,w)^\ast &\mathbb{k}_{2\times2}[\Phi](w,w)
\end{pmatrix}
\label{eq-matrixcorr1.1}
\end{equation}
is positive semidefinite (the asterisque $\ast$ stands for the operation of
taking the adjoint of the matrix). 
The real part of $\Phi(z)$ may be understood, up to an additive constant, 
as the restriction of the Gaussian free field (GFF) on $\C$ conditioned to be 
harmonic in $\D$. For some background on GFF, we refer to the survey paper 
\cite{Sheff} as well as to \cite{HedNiem}. 
Alternatively, the process $\Phi(z)$ may be identified
as the limit of the logarithm of the characteristic polynomial for random
unitary matrices as the size of the matrices tends to infinity (see below). 

In analogy with \cite{PerVir}, it might be of interest to study the random 
zeros of the function $\Phi(z)$, but since one of them is deterministic
(the origin), we should not expect full M\"obius automorphism invariance. 
By the Edelman-Kostlan formula (see \cite{Sodin}) the density of
zeros is given by
\begin{equation}
\varDelta\log \kernel_{\calD_0}(z,z)\,\diff A(z)
=\varDelta\log\log\frac{1}{1-|z|^2}
\diff A(z),
\end{equation} 
which has a unit point mass at the origin due to the deterministic zero there. 
Here, one might also be interested in the process for the critical points. 
We will not pursue any of these directions here. A rather interesting object 
appears to be the random curve (or tree) structure we obtain by following the 
gradient flow for the random harmonic function $\re\Phi(z)$ which stops at 
critical points. At each critical point we would instead choose among the 
possible directions, for instance by maximizing the second directional 
derivative (perhaps after precomposing with a M\"obius mapping to put 
the critical point at the origin). Although quite promising, We will not 
pursue this matter further here. A related setting of gradient flow for the 
plane defined in terms of the Bargmann-Fock space was studied by Nazarov,
Sodin, and Volberg \cite{NSV}.

\subsection{$\calD_0$-Gaussian analytic functions and 
random unitary matrices}
Let $M_n$ be a random $n\times n$ unitary matrix with distribution given 
by Haar measure. Let
\[
\chi_{M_n}(\lambda)=\det(\lambda{I}_n-M_n)
\]
be the associated random characteristic polynomial, where $I_n$ is the 
$n\times n$ identity matrix. Diaconis and Evans \cite{DiacEv} found an 
interesting relationship connecting the characteristic polynomial of $M_n$ 
with the process given by \eqref{eq-Gauss1}. They showed that
\[
\operatorname{tr}\log(I_n-zM_n^*)=\log\det(I_n-zM_n^*)
=\log\frac{\chi_{M_n}(z)}{\chi_{M_n}(0)}
\] 
converges, as $n\to+\infty$, in distribution, to the $\calD_0$-Gaussian 
analytic function $\Phi(z)$ given by \eqref{eq-Gauss1}. The details are 
supplied in Example 5.6 of \cite{DiacEv}. 
For the convenience of the reader, we mention
that the master relationship between their random function $F_n(z)$ and 
$\chi_{M_n}(z)$ has a typo, and should be replaced by
\[
F_n(z)=\frac{n}{2\pi}-\frac{z}{\pi}\frac{\chi_{M_n}'(z)}{\chi_{M_n}(z)}.
\] 

\begin{rem}
The matters considered here, the possible correlation structure of two 
jointly Gaussian $\calD_0$-GAFs, have their (finite-dimensional) 
counterpart for random matrices. Let $M_n$ and $M_n'$ be two copies of
the random $n\times n$ unitary matrix enemble, with possibly complicated
correlation structure between $M_n$ and $M_n'$, but at least all their
entries are jointly (complex) Gaussian variables. What could we say about the 
structure of the $\C^2$-valued process of normalized 
random characteristic polynomials
\[
\bigg(\frac{\chi_{M_n}(z)}{\chi_{M_n}(0)},
\frac{\chi_{M_n'}(z)}{\chi_{M_n'}(0)}\bigg)?
\]   
 
\end{rem}

\subsection{Two interacting copies of the $\calD_0$-Gaussian 
analytic function process}

The topic here involves two copies of the process \eqref{eq-Gauss1},
\begin{equation}
\Phi(z):=\sum_{j=1}^{+\infty}\frac{\alpha_j}{\sqrt{j}}\,z^j,\quad
\Psi(z):=\sum_{j=1}^{+\infty}\frac{\beta_j}{\sqrt{j}}\,z^j,
\label{eq-Gauss2.1}
\end{equation}
where $\Phi(z)$ is as before and the $\beta_j$ are i i d from $N_{\C}(0,1)$,
taken from the same Gaussian Hilbert space $\Gspace\subset L^2(\Omega)$.
We will refer to $(\Phi(z),\Psi(z))$ as a \emph{pair of jointly Gaussian 
$\calD_0$-GAFs}. Consisting of jointly Gaussian variables with zero mean, the  
vector-valued process $(\Phi(z),\Psi(z))$ is governed by the correlation matrix
\begin{multline}
{\mathbb{k}}_{4\times4}[\Phi,\Psi](z,w):=
\expect\begin{pmatrix}
\Phi(z)\\
\bar\Phi(z)\\
\Psi(z)\\
\bar\Psi(z)
\end{pmatrix}
\begin{pmatrix}
\bar\Phi(w)&\Phi(w)&\bar\Psi(w)&\Psi(w)
\end{pmatrix}
\\
=\begin{pmatrix}
\expect\Phi(z)\bar\Phi(w) & \expect\Phi(z)\Phi(w) & 
\expect\Phi(z)\bar\Psi(w) &\expect\Phi(z)\Psi(w)
\\
\expect\bar\Phi(z)\bar\Phi(w) & \expect\bar\Phi(z)\Phi(w) & 
\expect\bar\Phi(z)\bar\Psi(w) & \expect\bar\Phi(z)\Psi(w)
\\
\expect\Psi(z)\bar\Phi(w) & \expect\Psi(z)\Phi(w) & 
\expect\Psi(z)\bar\Psi(w) & \expect\Psi(z)\Psi(w)
\\
\expect\bar\Psi(z)\bar\Phi(w) & \expect\bar\Psi(z)\Phi(w) & 
\expect\bar\Psi(z)\bar\Psi(w) & \expect\bar\Psi(z)\Psi(w)
\end{pmatrix}
\\
=\begin{pmatrix}
\log\frac{1}{1-z\bar w} & 0 & 
\expect\Phi(z)\bar\Psi(w) &\expect\Phi(z)\Psi(w)
\\
0 & \log\frac{1}{1-\bar z w} & 
\expect\bar\Phi(z)\bar\Psi(w) & \expect\bar\Phi(z)\Psi(w)
\\
\expect\Psi(z)\bar\Phi(w) & \expect\Psi(z)\Phi(w) & 
\log\frac1{1-z\bar w} & 0
\\
\expect\bar\Psi(z)\bar\Phi(w) & \expect\bar\Psi(z)\Phi(w) & 
0 & \log\frac{1}{1-\bar z w}
\end{pmatrix},
\label{eq-4X4}
\end{multline}
and the associated $8\times8$ matrix
\begin{equation}
\begin{pmatrix}
\mathbb{k}_{4\times4}[\Phi](z,z)&\mathbb{k}_{4\times4}[\Phi](z,w)
\\
\mathbb{k}_{4\times4}[\Phi](z,w)^\ast &\mathbb{k}_{4\times4}[\Phi](w,w)
\end{pmatrix}
\label{eq-4X4.1}
\end{equation}
is positive semidefinite.
Note that although there are eight unknown entries in \eqref{eq-4X4}, in 
fact only two are needed, as clearly,
\[
\expect(\bar\Phi(z)\bar\Psi(w))=
\overline{\expect(\Phi(z)\Psi(w))},\quad
\expect(\bar\Phi(z)\Psi(w))=\overline{\expect(\Phi(z)\bar\Psi(w))},
\]
and the remaining four only involve exchanging the variables $z$ and $w$.

So we need only be concerned with the quantities
\begin{equation}
\expect(\Phi(z)\bar\Psi(w))\quad\text{and}\quad \expect(\Phi(z)\Psi(w)).
\label{eq-correl1.01}
\end{equation}
In a sense they complement each other, as we see below. 

\begin{prop}
We have that
\[
|\expect\Phi(z)\bar\Psi(w)|+|\expect\Phi(z)\Psi(w)|\le
\bigg(\log\frac{1}{1-|z|^2}\bigg)^{\frac12}
\bigg(\log\frac{1}{1-|w|^2}\bigg)^{\frac12},
\qquad z,w\in\D.
\]
\label{prop-triang1}
\end{prop}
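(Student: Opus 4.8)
The plan is to recognise both quantities in \eqref{eq-correl1.01} as inner products in the Gaussian Hilbert space $\Gspace$ and to package them into a single $2\times2$ cross-Gram matrix whose largest singular value turns out to equal exactly the left-hand side we must bound.

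First I would fix $z,w\in\D$ and regard $\Phi(z),\bar\Phi(z),\Psi(w),\bar\Psi(w)$ as vectors in $\Gspace\subset L^2(\Omega)$. Writing $c_1:=\expect\Phi(z)\bar\Psi(w)$ and $c_2:=\expect\Phi(z)\Psi(w)$, the convention $\langle\gamma,\gamma'\rangle_\Omega=\expect\gamma\bar\gamma'$ gives $c_1=\langle\Phi(z),\Psi(w)\rangle_\Omega$ and $c_2=\langle\Phi(z),\bar\Psi(w)\rangle_\Omega$. Two elementary computations fix the geometry. From $\expect|\Phi(z)|^2=\sum_{j\ge1}|z|^{2j}/j=\log\frac{1}{1-|z|^2}$ we read off $\|\Phi(z)\|_\Omega^2=\|\bar\Phi(z)\|_\Omega^2=\log\frac1{1-|z|^2}=:\sigma^2$, while the symmetry relation $\expect\Phi(z)^2=0$ gives $\langle\Phi(z),\bar\Phi(z)\rangle_\Omega=0$, so that $\Phi(z)\perp\bar\Phi(z)$. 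The same holds for $\Psi$, with $\|\Psi(w)\|_\Omega^2=\log\frac1{1-|w|^2}=:\rho^2$ and $\Psi(w)\perp\bar\Psi(w)$. Hence $\{\Phi(z)/\sigma,\bar\Phi(z)/\sigma\}$ and $\{\Psi(w)/\rho,\bar\Psi(w)/\rho\}$ are two orthonormal systems in $\Gspace$.

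Next I would form the cross-Gram matrix of these two orthonormal systems. Using conjugation-invariance to evaluate the two remaining entries, namely $\langle\bar\Phi(z),\Psi(w)\rangle_\Omega=\overline{\expect\Phi(z)\Psi(w)}=\overline{c_2}$ and $\langle\bar\Phi(z),\bar\Psi(w)\rangle_\Omega=\overline{c_1}$, this cross-Gram matrix equals $G=(\sigma\rho)^{-1}G_0$ with $G_0=\begin{pmatrix}c_1&c_2\\\overline{c_2}&\overline{c_1}\end{pmatrix}$. Since the cross-Gram matrix of two orthonormal systems in a Hilbert space is always a contraction (this is just Bessel's inequality), we obtain $\|G_0\|\le\sigma\rho$ in operator norm.

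It then remains only to identify $\|G_0\|$. A direct computation of $G_0^\ast G_0$ shows that its eigenvalues are $(|c_1|+|c_2|)^2$ and $(|c_1|-|c_2|)^2$, so the singular values of $G_0$ are $|c_1|+|c_2|$ and $\bigl||c_1|-|c_2|\bigr|$; in particular $\|G_0\|=|c_1|+|c_2|$. Combining this with the contraction bound yields precisely $|c_1|+|c_2|\le\sigma\rho$, which is the assertion since $\sigma\rho=(\log\frac1{1-|z|^2})^{\frac12}(\log\frac1{1-|w|^2})^{\frac12}$. The subtle point — and the reason a naive argument fails — is that applying Cauchy–Schwarz to each correlation separately, or to $\Phi(z)$ against the orthogonal pair $\Psi(w),\bar\Psi(w)$, only delivers the weaker bound with a spurious factor $\sqrt2$; the saving comes entirely from the conjugation-symmetric shape of $G_0$, whose top singular value is exactly $|c_1|+|c_2|$ rather than $\sqrt2$ times the root-mean-square of the two entries.
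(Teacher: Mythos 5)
Your proof is correct, and it reaches the estimate by a route that differs in its key mechanism from the paper's, although both ultimately rest on the same positivity. The paper's proof expands $\expect\bigl|a\Phi(z)+b\bar\Phi(z)-c\Psi(w)-d\bar\Psi(w)\bigr|^2\ge0$, restricts the parameters to $b=\bar a$, $d=\bar c$, and then optimizes separately over the phases and the moduli of $a$ and $c$; in other words, it is an explicit test-vector argument against the positive semidefinite matrix \eqref{eq-4X4.1}, finished off with an arithmetic--geometric mean step. You instead package the four correlations into the cross-Gram matrix $G_0$ of the two orthonormal pairs $\{\Phi(z)/\sigma,\bar\Phi(z)/\sigma\}$ and $\{\Psi(w)/\rho,\bar\Psi(w)/\rho\}$, invoke the Bessel-inequality fact that the cross-Gram matrix of two orthonormal systems is a contraction, and then compute $\|G_0\|$ exactly: the conjugate-symmetric shape of $G_0$ forces the singular values $|c_1|+|c_2|$ and $\bigl||c_1|-|c_2|\bigr|$, which I have checked. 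What your version buys is conceptual sharpness: it identifies the left-hand side of the proposition as precisely the operator norm of the correlation block, so the inequality is seen to be \emph{exactly} the contraction property of that block, with no slack lost in the argument; it also makes transparent why a naive application of Bessel's inequality to $\Phi(z)$ alone only yields the weaker bound $\sqrt2\,\sigma\rho$. What the paper's version buys is self-containedness: it needs nothing beyond expanding one square, and in hindsight its restriction $b=\bar a$, $d=\bar c$ is the choice of test vectors aligned with the top singular vector that your computation exhibits. The only small point missing from your writeup is the degenerate case $z=0$ or $w=0$, where $\sigma$ or $\rho$ vanishes and the normalization is not defined; there $\Phi(0)=0$ (or $\Psi(0)=0$), so both sides of the asserted inequality are zero and the claim is trivial.
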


Since for a given point with $|z|=|w|$ each of the two terms on the 
left-hand side may reach up to the right-hand side bound, the estimate tells
us they cannot do so simultaneously. 
The proof of this estimate is presented in Subsection \ref{subsec-thm-fund1}.

\subsection{The fundamental integral estimate}

The following is our basic estimate of the correlations. 

\begin{thm}
For $a,b\in\C$, we have the estimate
\[
\int_\D\big|a w\expect\Phi(z)\Psi'(w)+b\bar w\expect\Phi(z)\bar\Psi'(w)\big|^2
\frac{\diff A(w)}{|w|^2}\le (|a|^2+|b|^2)\log\frac{1}{1-|z|^2},\qquad z\in\D.
\]
\label{thm-fund1}
\end{thm}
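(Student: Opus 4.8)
The plan is to expand the two correlations as power series in $w$, perform the angular integration (which decouples the holomorphic and anti-holomorphic contributions), and then recognize the surviving coefficient sums as Bessel sums against the orthonormal systems $\{\beta_k\}$ and $\{\bar\beta_k\}$ inside the Gaussian Hilbert space $\Gspace$. The whole estimate should then reduce to a single application of Bessel's inequality together with the identity $\expect|\Phi(z)|^2=\log\frac1{1-|z|^2}$.

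First I would introduce the entrywise correlations $p_{jk}:=\expect(\alpha_j\beta_k)$ and $q_{jk}:=\expect(\alpha_j\bar\beta_k)$, and compute, using $\Psi'(w)=\sum_{k\ge1}\sqrt k\,\beta_k w^{k-1}$ and $\bar\Psi'(w)=\sum_{k\ge1}\sqrt k\,\bar\beta_k\bar w^{k-1}$, that
\[
F(w):=aw\,\expect\Phi(z)\Psi'(w)=\sum_{k\ge1}\Big(a\sqrt k\sum_j\tfrac{z^j}{\sqrt j}p_{jk}\Big)w^k,\qquad
G(w):=b\bar w\,\expect\Phi(z)\bar\Psi'(w)=\sum_{k\ge1}\Big(b\sqrt k\sum_j\tfrac{z^j}{\sqrt j}q_{jk}\Big)\bar w^k.
\]
Thus $F$ is holomorphic in $w$ with no constant term, and $G$ is anti-holomorphic with no constant term.

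Next I would write $w=re^{\imag\theta}$, so that $\diff A(w)/|w|^2=(\pi r)^{-1}\diff r\,\diff\theta$, and integrate $|F(w)+G(w)|^2$ over $\theta$. The two mixed terms carry a factor $w^k w^l$ or $\bar w^k\bar w^l$ with strictly positive total frequency $k+l\ge2$, hence vanish after integration in $\theta$; only the diagonal $k=l$ of each square survives. The radial integral $\int_0^1 r^{2k-1}\diff r=\tfrac1{2k}$ then furnishes exactly the factor $1/k$ that cancels the $(\sqrt k)^2$ coming from differentiation, leaving
\[
\int_\D\big|aw\expect\Phi(z)\Psi'(w)+b\bar w\expect\Phi(z)\bar\Psi'(w)\big|^2\frac{\diff A(w)}{|w|^2}
=|a|^2\sum_{k\ge1}\Big|\sum_j\tfrac{z^j}{\sqrt j}p_{jk}\Big|^2+|b|^2\sum_{k\ge1}\Big|\sum_j\tfrac{z^j}{\sqrt j}q_{jk}\Big|^2.
\]

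Finally I would identify the inner sums with inner products in $\Gspace$: since $p_{jk}=\langle\alpha_j,\bar\beta_k\rangle_\Omega$ and $q_{jk}=\langle\alpha_j,\beta_k\rangle_\Omega$, we have $\sum_j\tfrac{z^j}{\sqrt j}p_{jk}=\langle\Phi(z),\bar\beta_k\rangle_\Omega$ and $\sum_j\tfrac{z^j}{\sqrt j}q_{jk}=\langle\Phi(z),\beta_k\rangle_\Omega$. Both $\{\beta_k\}$ and $\{\bar\beta_k\}$ are orthonormal systems in $\Gspace$, the latter because conjugation preserves orthonormality and $\Gspace$ is conjugation-invariant; hence Bessel's inequality bounds each sum by $\norm{\Phi(z)}_\Gspace^2=\expect|\Phi(z)|^2=\sum_j|z|^{2j}/j=\log\frac1{1-|z|^2}$, which yields the claimed estimate with constant $|a|^2+|b|^2$. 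The only genuinely delicate point is the bookkeeping in the angular integration — verifying that the holomorphic and anti-holomorphic parts truly decouple and that the radial weight $\diff A(w)/|w|^2$ produces precisely the Dirichlet normalization $1/k$; once that is secured, the result is exactly Bessel's inequality and carries no slack beyond it.
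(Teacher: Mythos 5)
Your proof is correct. It shares its first step with the paper's argument: since $aw\,\expect\Phi(z)\Psi'(w)$ is holomorphic in $w$ and $b\bar w\,\expect\Phi(z)\bar\Psi'(w)$ is anti-holomorphic (both vanishing at $w=0$), the cross terms die under integration and the left-hand side splits as $|a|^2\int_\D|\expect\Phi(z)\Psi'(w)|^2\diff A(w)+|b|^2\int_\D|\expect\Phi(z)\bar\Psi'(w)|^2\diff A(w)$; the paper states this as ``$L^2(\D)$-orthogonality'', you verify it by hand in polar coordinates. Where you genuinely diverge is in bounding each piece. The paper invokes the representation \eqref{eq-QR1} built in Section 2, namely $\expect\Phi(z)\Psi(w)=\sum_j e_j(z)\,\Qop e_j(w)$ and $\expect\Phi(z)\bar\Psi(w)=\sum_j e_j(z)\,\overline{\Rop e_j(w)}$ with $\Qop=(\Pop_{\Xspace_*}\bar\Sop)^\ddagger$ and $\Rop=(\Pop_{\Xspace}\bar\Sop)^\dagger$ contractions on $\calD_0(\D)$, and then reads each integral as the Dirichlet norm-square of $\Qop$ (resp.\ $\Rop$) applied to $\sum_j e_j(z)e_j$, bounded by $\sum_j|e_j(z)|^2=\log\frac{1}{1-|z|^2}$. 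You instead compute the integrals explicitly in Taylor coefficients, identify the $k$-th coefficient sums as $\langle\Phi(z),\bar\beta_k\rangle_\Omega$ and $\langle\Phi(z),\beta_k\rangle_\Omega$, and finish with Bessel's inequality for the orthonormal systems $\{\bar\beta_k\}$ and $\{\beta_k\}$ in $\Gspace$ (your justification that $\{\bar\beta_k\}$ is orthonormal, via conjugation-invariance, is correct). The two bounds are of course the same inequality in disguise --- the contractivity of $\Qop$ and $\Rop$ is exactly Bessel for these systems, inherited from the partial isometry $\Sope$ and the projections --- but your route is self-contained: it bypasses the GAF-transpose and partial-isometry machinery of Section 2 entirely, and it makes transparent that the estimate carries no slack beyond Bessel. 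What the paper's heavier route buys is reusable structure: the operators $\Qop$, $\Rop$ and the operator $\Sope$ underlie the Dirichlet-symbol correspondence of Theorem \ref{thm-transfer1}, so the paper gets this theorem as a near-corollary of objects it needs anyway. The only points you should make explicit in a final write-up are the routine convergence justifications (term-by-term expectation of the double series for $\expect\Phi(z)\Psi'(w)$, and monotone convergence to pass from integrals over $|w|<\rho$ to the full disk), which are at the same level of rigor the paper itself leaves implicit.
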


This may be interpreted as an estimate of the radial derivative (with respect 
to $w$) of the harmonic function 
\[
a\expect\Phi(z)\Psi(w)+b\expect\Phi(z)\bar\Psi(w).
\]
Indeed, if $F$ is holomorphic in $\D$, then its radial derivative is
\[
\partial_r F(r\e^{\imag\theta})=\e^{\imag\theta}F'(r\e^{\imag\theta}),
\]
so that the estimate of Theorem \ref{thm-fund1} asserts that ($\partial_{r(w)}$
is the radial derivative in the $w$ variable)
\begin{equation}
\int_\D\big|\partial_{r(w)}\big(a \expect\Phi(z)\Psi(w)+b
\expect\Phi(z)\bar\Psi(w)\big)\big|^2
\diff A(w)
\le (|a|^2+|b|^2)\log\frac{1}{1-|z|^2},\qquad z\in\D.
\label{eq-fundest2}
\end{equation}
Interesting estimates are obtained for instance when $(a,b)=(1,0)$ and
$(a,b)=(0,1)$. We shall mainly focus on the first of these, when 
$(a,b)=(1,0)$. We defer the proof of this result to Section 
\ref{sec-hilbspaces}.

\subsection{Growth of correlations in the mean along diagonals}
We are interested in the behavior of the correlations 
\[
\expect\Phi(z)\Psi(w),\quad\expect\Phi(z)\bar\Psi(w)
\]
as $z,w\in\D$ approach the unit circle $\Te$. The first one we will refer to
as the \emph{analytic correlation}, and the second the 
\emph{sesquianalytic correlation}.
We may study the growth behavior by looking along complex lines through 
the origin $w=\lambda z$ for some parameter $\lambda\in\C$ in which case 
our correlations are 
\begin{equation}
\expect\Phi(z)\Psi(\lambda z),\quad\expect\Phi(z)\bar\Psi(\lambda z).
\label{eq-twocorr1}
\end{equation}
The alternative study of conjugate-linear lines $w=\mu\bar z$ with $\mu\in\C$
is completely analogous and essentially only corresponds to reversing 
the order of these correlations (in the sense that 
$w\mapsto\bar\Psi(\mu\bar w)$ is a GAF). For this reason we will not consider
such conjugate-linear lines further. When $|\lambda|<1$ the process $\Phi(z)$
dominates in the correlations since $\Psi(\lambda z)$ is analytic in the disk
$\D(0,|\lambda|^{-1})$, while if $|\lambda|>1$ instead the process 
$\Psi(\lambda z)$ dominates. The most interesting instance seems to be the 
balanced case when $|\lambda|=1$, in which case the line $w=\lambda z$ might 
be called a \emph{generalized diagonal}.  For $|\lambda|=1$, the process 
$\Psi(\lambda z)$ is just another copy of the $\calD_0$-GAF, so as long as
$\lambda$ is fixed we might as well consider $\lambda=1$. So the study of 
\eqref{eq-twocorr1} for fixed $\lambda$ with $|\lambda|=1$ reduces to the
diagonal case
\begin{equation}
\expect\Phi(z)\Psi(z),\quad\expect\Phi(z)\bar\Psi(z).
\label{eq-twocorr2}
\end{equation}
We note that by Proposition \ref{prop-triang1},
\begin{equation}
|\expect\Phi(z)\Psi(z)|+|\expect\Phi(z)\bar\Psi(z)|\le\log\frac{1}{1-|z|^2}.
\label{eq-twocorr3}
\end{equation}
Some examples should elucidate which term, if any, may be dominant on the 
left-hand side. 

\begin{rem} We supply some examples which help us understand the size
of the two contributions on the left-hand side of \eqref{eq-twocorr3}. 

\noindent (a) If $\Psi=\Phi$, then
\[
\expect\Phi(z)\Psi(z)=\expect(\Phi(z)^2)=0,\quad
\expect\Phi(z)\bar\Psi(z)=\expect|\Phi(z)|^2=\log\frac{1}{1-|z|^2}.
\]
In this case we have \emph{equality} in \eqref{eq-twocorr3}, 
and on the left-hand side the first term vanishes, while the second is
dominant.

\noindent{(b)}
If $\Psi(z)$ and $\Phi(z)$ are stochastically independent, we have 
\[
\expect\Phi(z)\bar\Psi(z)=
\expect\Phi(z)\Psi(z)=0,
\]
so that both contributions to the left-hand side \eqref{eq-twocorr3}
collapse.

\noindent{(c)}
Consider $\Psi(z)=\bar\Phi(\bar z)$, when
\[
\expect\Phi(z)\Psi(z)=\expect\Phi(z)\bar\Phi(\bar z)=\log\frac{1}{1-z^2},
\quad \expect\Phi(z)\bar\Psi(z)=\expect\Phi(z)\Phi(\bar z)=0.
\] 
So at least pointwise, $\expect\Phi(z)\Psi(z)$ may be the dominant 
contribution in \eqref{eq-twocorr3}.
\label{rem-1.1}
\end{rem}

The example in Remark \ref{rem-1.1}(a) shows that the sesquianalytic 
correlation $\expect\Phi(z)\bar\Psi(z)$ may be maximally big in the sense 
of modulus \emph{everywhere in the disk} $\D$. However, the example in
Remark \ref{rem-1.1}(c) only says that the analytic correlation 
$\expect\Phi(z)\Psi(z)$ may be maximal in modulus along the radius $[0,1[$ 
emanating from the origin. This leaves open the possibility of bounding 
$L^2$ means along concentric circles.
The fact that $\expect\Phi(z)\Psi(z)$ represents a holomorphic function 
in $\D$ limits to some extent the possible growth of the function.
However, from the work of Abakumov and Doubtsov \cite{AbDoub}, we see that this
is not a very strong restriction, and effectively knowing that 
$\expect\Phi(z)\Psi(z)$ is holomorphic does not add much to the growth control
beyond the pointwise bound \eqref{eq-twocorr3}, which may be understood as
belonging to a Korenblum-type growth space. For some other aspects on the
growth behavior of functions in Korenblum-type spaces, see \cite{BoLyuMalTho}.  
To measure growth of functions in the Bloch space, the asymptotic variance 
of a function in the Bloch space has been studied 
(see \cite{McM1}, \cite{AIPP}, \cite{IvriiQC}, \cite{Hed1}). 
We recall that the \emph{Bloch space} $\calB(\D)$ consists of all 
complex-valued holomorphic functions $f:\D\to\C$ such that 
\[
\|f\|_{\calB}:=\sup_{z\in\D}(1-|z|^2)|f'(z)|<+\infty.
\]  
Naturally, this defines a seminorm on $\calB(\D)$, as constants get seminorm 
value $0$.
The \emph{asymptotic variance} of a function $f\in\calB(\D)$ is
the quantity
\begin{equation}
\sigma(f)^2:=\limsup_{r\to1^-}
\frac{1}{\log\frac{1}{1-r^2}}\int_\Te|f(r\zeta)|^2\diff s(\zeta).
\label{eq-asvar1}
\end{equation}
At least in dynamical situations, it captures very well the boundary growth 
of the given function. From a probabilistic point of view, it is based on
thinking of the evolution of the function $r\mapsto f(r\zeta)$ as a Brownian 
motion in time $\log\frac{1+r}{1-r}\sim\log\frac1{1-r^2}$. 
The analytic correlation $f(z)=\expect\Phi(z)\Psi(z)$ need not be an element
of the Bloch space $\calB(\D)$. However, it has a finite asymptotic variance
nevertheless.

\begin{thm}
For all jointly Gaussian processes $(\Phi,\Psi)$ consisting of
$\calD_0$-GAFs, we have the estimate
\[
\int_\Te |\expect\Phi(r\zeta)\Psi(r\zeta)|^2\diff s(\zeta)
\le 2r^2\log\frac{1}{1-r^2}+r^2.
\]
\label{thm-fund2}
\end{thm}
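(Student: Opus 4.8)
The plan is to pass to Fourier coefficients on the circle and reduce the statement to a summation inequality (which is exactly inequality (1) of the abstract with $s=r^2$, since $2r^2\log\frac{\mathrm e^{1/2}}{1-r^2}=2r^2\log\frac{1}{1-r^2}+r^2$). Writing $c_{jk}:=\expect(\alpha_j\beta_k)$, the product expands as $\Phi(z)\Psi(z)=\sum_{j,k\ge1}(jk)^{-1/2}\alpha_j\beta_k z^{j+k}$, so $\expect\Phi(z)\Psi(z)=\sum_{l\ge2}a_l z^l$ with $a_l:=\sum_{j+k=l}(jk)^{-1/2}c_{jk}$. Since the monomials $\zeta^l$ are orthonormal on $\Te$ with respect to $\diff s$, Parseval gives $\int_\Te|\expect\Phi(r\zeta)\Psi(r\zeta)|^2\diff s(\zeta)=\sum_{l\ge2}|a_l|^2 r^{2l}$, so the target becomes $\sum_{l\ge2}|a_l|^2 r^{2l}\le 2r^2\log\frac1{1-r^2}+r^2$.

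Next I would record the constraints on $(c_{jk})$. Since $c_{jk}=\langle\alpha_j,\bar\beta_k\rangle$ is the cross-Gram matrix of the two orthonormal systems $\{\alpha_j\}$ and $\{\bar\beta_k\}$ in the Gaussian Hilbert space, it is a contraction; in particular every row and every column has $\ell^2$-norm at most one. (Equivalently, Theorem \ref{thm-fund1} with $(a,b)=(1,0)$, applied to $(\Phi,\Psi)$ and then to $(\Psi,\Phi)$ and unpacked, yields the stronger energy bounds $\sum_{j,k}\frac{|c_{jk}|^2}{j}r^{2j}\le\log\frac1{1-r^2}$ and $\sum_{j,k}\frac{|c_{jk}|^2}{k}r^{2k}\le\log\frac1{1-r^2}$, either of which suffices below.)

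For the main estimate I would reorganize $\sum_l|a_l|^2 r^{2l}=\sum_{j+k=j'+k'}(jkj'k')^{-1/2}c_{jk}\bar c_{j'k'}r^{2(j+k)}$ according to the shift $m=j-j'$, writing it as $D_0+2\re\sum_{m\ge1}D_m$ with $D_m:=\sum_{j>m,\,k\ge1}(jk(j-m)(k+m))^{-1/2}c_{jk}\bar c_{j-m,k+m}r^{2(j+k)}$ and $D_0=\sum_{j,k}(jk)^{-1}|c_{jk}|^2 r^{2(j+k)}$. For each off-diagonal $D_m$, a Cauchy--Schwarz in the pair $(j,k)$ (splitting the weight symmetrically and reindexing the second factor by $(j-m,k+m)\mapsto(j,k)$) bounds $|D_m|$ by the geometric mean of the two tails $\sum_{j>m,k}(jk)^{-1}|c_{jk}|^2 r^{2(j+k)}$ and $\sum_{j,k>m}(jk)^{-1}|c_{jk}|^2 r^{2(j+k)}$. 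Applying AM--GM and summing over $m\ge1$, an interchange of summation collapses these to $\sum_{j,k}\frac{(j-1)+(k-1)}{jk}|c_{jk}|^2 r^{2(j+k)}$; adding $D_0$ gives the intermediate bound $\sum_l|a_l|^2 r^{2l}\le\sum_{j,k}\frac{j+k-1}{jk}|c_{jk}|^2 r^{2(j+k)}$. Finally $\frac{j+k-1}{jk}\le\frac1j+\frac1k$, and each resulting sum is at most $r^2\log\frac1{1-r^2}$ by the row/column constraints (pulling out $r^{2k}\le r^2$, respectively $r^{2j}\le r^2$). This yields the theorem, in fact with the slightly sharper constant term absorbed.

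The main obstacle is the off-diagonal analysis. A pointwise Cauchy--Schwarz applied directly to $\expect\Phi(z)\Psi(z)=\sum_k\frac{z^k}{\sqrt k}g_k(z)$, where $g_k(z)=\expect\Phi(z)\beta_k$ satisfies $\sum_k|g_k(z)|^2\le\log\frac1{1-|z|^2}$, only produces the lossy estimate $\bigl(\log\frac1{1-r^2}\bigr)^2$; the linear-in-logarithm growth must instead be extracted from the orthogonality encoded in the antidiagonal constraint $j+k=j'+k'$. The delicate point is to organize the cross terms $D_m$ so that Cauchy--Schwarz, AM--GM, and re-summation return precisely the weight $\frac{j+k-1}{jk}$ rather than something heavier, and in particular so that the shrinking tails supply enough decay in $m$ for the sum over $m$ to converge to a bounded multiple of the energy. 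This is exactly what forces the correct constant $2$ in front of the logarithm.
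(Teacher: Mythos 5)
Your proof is correct, and it takes a genuinely different route from the paper's. You work entirely on the coefficient side: after Parseval, the statement becomes a bound for the quadratic form $\sum_{j+k=j'+k'}(jkj'k')^{-1/2}c_{jk}\bar c_{j'k'}\,r^{2(j+k)}$ in the cross-Gram matrix $c_{jk}=\expect(\alpha_j\beta_k)$, whose rows and columns lie in the unit ball of $\ell^2$ by Bessel's inequality; your shift decomposition $m=j-j'$, Cauchy--Schwarz with reindexing, AM--GM, and Tonelli resummation produce the weight $\frac{j+k-1}{jk}\le\frac1j+\frac1k$, and each of the two resulting sums is at most $r^2\log\frac{1}{1-r^2}$. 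I checked the counting (each pair $(j,k)$ is hit by $(j-1)+(k-1)$ off-diagonal shifts plus once on the diagonal), and it is right. The paper argues instead through function theory on the bidisk: it starts from Theorem \ref{thm-fund1} (itself proved via the operator representation \eqref{eq-QR1}), regards $\expect\Phi(rz)\Psi'(rw)$ as an element of $A^2_{-1,0}(\D^2)$, applies the diagonal norm expansion Theorem \ref{thm-hedshim-DMJ} of \cite{HedShim1}, keeps only the $n=0$ term via the combinatorial identity \eqref{eq-combid1}, symmetrizes in $\Phi$ and $\Psi$, and finishes with a Littlewood--Paley identity; the additive $r^2$ in the statement is exactly the cost of reinstating $\int_\D|F|^2\diff A$ there, where $F(z)=\expect\Phi(rz)\Psi(rz)$. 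Your argument is elementary and self-contained---it needs neither Theorem \ref{thm-fund1}, nor the bidisk expansion, nor Littlewood--Paley---and it proves Corollary \ref{cor-Hilb1} directly for arbitrary orthonormal systems; moreover it yields the slightly sharper bound $2r^2\log\frac{1}{1-r^2}$, absorbing the $+r^2$. What the paper's heavier route buys is the surplus your argument never sees: the $n\ge1$ terms of \eqref{eq-expanded1} constitute a whole family of further inequalities, and Theorem \ref{thm-fund1} together with the symbol calculus is reused elsewhere in the paper. In both proofs the source of the constant $2$ is transparent: for you it is the two terms $\frac1j+\frac1k$, while for the paper it is the splitting of $\partial_z\expect\Phi(rz)\Psi(rz)$ into its two Leibniz terms.
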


This means that in the $L^2$-average sense on concentric circles, the
function $\expect\Phi(z)\Psi(z)$ spends most of its time on $|z|=r$ with
values bounded by a constant times \emph{the square root of} 
$\log\frac{1}{1-r^2}$, which is of course much smaller than what the
bound \eqref{eq-twocorr3} would allow for. In terms of the random variables
$\alpha_j,\beta_k$, the left-hand side expresssion in the above theorem
equals
\begin{equation}
\int_\Te |\expect\Phi(r\zeta)\Psi(r\zeta)|^2\diff s(\zeta)
=\sum_{l=2}^{+\infty}r^{2l}\bigg|
\sum_{j,k:j+k=l}(jk)^{-\frac12}\langle\alpha_j,\bar\beta_k\rangle_\Omega\bigg|^2.
\label{eq-asvar1.01}
\end{equation}
It is natural to wonder if the bound $\sigma(f)^2\le2$ for the asymptotic 
variance of the analytic correlation $f(z)=\expect\Phi(z)\Psi(z)$ in 
Theorem \ref{thm-fund2} is optimal. By a construction due to Zachary 
Chase \cite{Chase}, we have the following. 

\begin{thm} {\rm(Chase)}
There is a permutation $\pi:\Z_+\to\Z_+$ such that if 
$\beta_j=\bar \alpha_{\pi(j)}$ and $f(z)=\expect\Phi(z)\Psi(z)$, we have
$\sigma(f)^2\ge 1.72$.
\label{thm-1.72}
\end{thm}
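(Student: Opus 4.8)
The plan is to exhibit one explicit permutation $\pi$ and compute the resulting asymptotic variance; since the statement only asks for existence, a single good $\pi$ suffices. First I would record what the analytic correlation becomes under the ansatz $\beta_j=\bar\alpha_{\pi(j)}$. Because the $\alpha_j$ are orthonormal in $\Gspace$, we have $\langle\alpha_j,\bar\beta_k\rangle_\Omega=\langle\alpha_j,\alpha_{\pi(k)}\rangle_\Omega=\delta_{j,\pi(k)}$, so that $f(z)=\expect\Phi(z)\Psi(z)=\sum_{k\ge1}(\pi(k)\,k)^{-1/2}z^{\,k+\pi(k)}$. Feeding this into the identity \eqref{eq-asvar1.01} reduces everything to the Fourier coefficients on circles:
\[
\int_\Te|f(r\zeta)|^2\diff s(\zeta)=\sum_{l=2}^{+\infty}r^{2l}\,|c_l|^2,\qquad c_l:=\sum_{k:\,k+\pi(k)=l}\frac{1}{\sqrt{\pi(k)\,k}},
\]
and the whole game becomes choosing $\pi$ so that $\sum_l r^{2l}|c_l|^2$ grows like $\sigma^2\log\frac{1}{1-r^2}$ with $\sigma^2\ge1.72$.

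The construction I would use partitions $\Z_+$ into consecutive blocks $B_n=\{a_n,a_n+1,\ldots,b_n\}$ growing geometrically, $b_n\approx\rho\,a_n$ and $a_{n+1}=b_n+1$, for a fixed ratio $\rho>1$ to be optimized, and lets $\pi$ act on each block by the reversal $\pi(k)=a_n+b_n-k$. This is an involution of $B_n$, hence a genuine permutation of $\Z_+$, and it has the crucial feature that $k+\pi(k)=a_n+b_n=:l_n$ is \emph{constant} on $B_n$. Consequently all the mass of block $n$ piles up into a single coefficient, $c_{l_n}=\sum_{k=a_n}^{b_n}\bigl(k(l_n-k)\bigr)^{-1/2}$, while $c_l=0$ for $l$ not of the form $l_n$ (the $l_n$ being distinct since the blocks increase).

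Next I would evaluate the two relevant asymptotics. Writing $u=k/l_n$, the block sum is a Riemann sum for $\int(u(1-u))^{-1/2}\diff u$ over $[a_n/l_n,b_n/l_n]$, an interval bounded away from the endpoints $0,1$; since the block lengths tend to infinity this yields $c_{l_n}\to C(\rho):=2\arcsin\frac{\rho-1}{\rho+1}$. For the circle integral, $\sum_n r^{2l_n}c_{l_n}^2$ with $l_n$ growing like $\rho^{\,n}$ is a lacunary theta-type sum; an Abelian/Mellin analysis (the relevant Mellin transform has a double pole at the origin, the simple poles contributing only bounded oscillation) gives $\sum_n r^{2l_n}\sim(\log\rho)^{-1}\log\frac{1}{1-r^2}$, and since $c_{l_n}\to C(\rho)$ the finitely many pre-asymptotic terms are negligible against $\log\frac{1}{1-r^2}\to\infty$. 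Hence
\[
\sigma(f)^2=\lim_{r\to1^-}\frac{\sum_n r^{2l_n}c_{l_n}^2}{\log\frac1{1-r^2}}=\frac{C(\rho)^2}{\log\rho}=\frac{4\,\arcsin^2\!\frac{\rho-1}{\rho+1}}{\log\rho}.
\]

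Finally I would optimize the right-hand side over $\rho>1$. Setting $t=\frac{\rho-1}{\rho+1}$ one maximizes $2\arcsin^2(t)/\log\frac{1+t}{1-t}$; this vanishes as $t\to0^+$ and as $t\to1^-$, and attains a maximum slightly above $1.72$ near $t\approx0.93$ (that is, $\rho\approx29$), so any $\rho$ close to this optimizer delivers $\sigma(f)^2\ge1.72$. I expect the main obstacle to be the uniform quantitative control in the double limit: simultaneously pinning down the Riemann-sum convergence $c_{l_n}\to C(\rho)$ with effective error bounds and the Abelian asymptotics of the lacunary sum $\sum_n r^{2l_n}$, and checking that the oscillation inherent to geometrically spaced frequencies does not spoil the value of the limit. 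The upper bound $\sigma(f)^2\le2$ from Theorem \ref{thm-fund2} shows the construction cannot be pushed past $2$, so the only gap left open is the numerical one between $1.72$ and $2$.
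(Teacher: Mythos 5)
Your proposal is correct and is essentially the paper's own argument: the paper's permutation $\pi_d(j)=d^m-j$ on intervals $I_m$ whose endpoints sum to $d^m$ is exactly your block-reversal construction with geometric ratio $\rho=d$, and the paper likewise combines the Riemann-sum limit of the block coefficient (written there as an incomplete Beta value $\pi-4(d+1)^{-1/2}{}_2F_1(\tfrac12,\tfrac12;\tfrac32;\tfrac1{d+1})$, which equals your $2\arcsin\tfrac{d-1}{d+1}$) with the standard lacunary Abelian asymptotic $\sum_m r^{2d^m}\sim(\log d)^{-1}\log\tfrac1{1-r^2}$, then optimizes at $d=29$ to get $\approx1.7208$. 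The only differences are cosmetic (real ratio $\rho$ versus integer $d$, and the Mellin-transform phrasing of the lacunary estimate, which the paper simply quotes as well known).
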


So, it remains to investigate the universal quantity 
$\Sigma^2_{\mathrm{oper}}:=\sup_f\sigma(f)^2$, where $f$ runs over all 
possible analytic correlations $\expect\Phi(z)\Psi(z)$. The subscript
refers to the relation with norm contractive operators on $L^2(\D)$
described in Subsection \ref{subsec-symbols1.1} below. 

\subsection{Orthonormal systems in separable Hilbert space}
In terms of the inner products $\langle\alpha_j,\bar \beta_k\rangle_\Omega$,
the condition that the elements belong to a Gaussian Hilbert space is 
inconsequential and may be removed.

\begin{cor}
Let $\{x_1,x_2,x_3,\ldots\}$ an $\{y_1,y_2,y_3,\ldots\}$ be orthonormal systems 
in a separable complex Hilbert space $\calH$. Then, for $0\le r<1$, we have
the estimate
\begin{equation*}
\sum_{l=2}^{+\infty}r^{2l}\bigg|\sum_{j,k:j+k=l}(jk)^{-\frac12}\langle 
x_j,y_k\rangle_{\calH}\bigg|^2\le 2\log\frac{\e}{1-r^2}.
\end{equation*}
\label{cor-Hilb1}
\end{cor}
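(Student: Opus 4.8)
The plan is to deduce the corollary from Theorem~\ref{thm-fund2} together with the identity~\eqref{eq-asvar1.01}. The right-hand side of \eqref{eq-asvar1.01} has exactly the shape of the sum we must bound, but with $\langle x_j,y_k\rangle_\calH$ replaced by $\langle\alpha_j,\bar\beta_k\rangle_\Omega$. So it suffices to realize the given orthonormal systems inside a Gaussian Hilbert space: I must produce a pair of jointly Gaussian $\calD_0$-GAFs, with coefficients $\alpha_j$ and $\beta_k$, for which $\langle\alpha_j,\bar\beta_k\rangle_\Omega=\langle x_j,y_k\rangle_\calH$ for all $j,k$. Since $\langle\alpha_j,\bar\beta_k\rangle_\Omega=\expect\alpha_j\beta_k$, the target is $\expect\alpha_j\beta_k=\langle x_j,y_k\rangle_\calH$. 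Once this is achieved, \eqref{eq-asvar1.01} turns the left-hand side of the corollary into the integral of $|\expect\Phi(r\zeta)\Psi(r\zeta)|^2$, and Theorem~\ref{thm-fund2} finishes the estimate.

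For the construction I use the splitting $\Gspace=\Hspace\oplus\Hspace_*$ from Subsection~\ref{subsec-CGHS}. Take $\alpha_j:=\gamma_j$, so that $\{\alpha_j\}$ is a sequence of iid standard complex Gaussians spanning $\Hspace$. Since I want $\expect\gamma_j\beta_k=\langle x_j,y_k\rangle_\calH$, and since $\expect\gamma_j\gamma_m=0$ while $\expect\gamma_j\bar\gamma_m=\delta_{jm}$, the $\Hspace_*$-component of $\beta_k$ is forced to be $u_k:=\sum_j\langle x_j,y_k\rangle_\calH\,\bar\gamma_j$, which by Bessel's inequality is a genuine element of $\Hspace_*$. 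It remains to set $\beta_k=u_k+w_k$, where $w_k$ is assembled from a fresh family $\delta_1,\delta_2,\ldots$ of iid standard complex Gaussians, independent of the $\gamma$'s and drawn from an enlarged Gaussian Hilbert space. Because the $\delta$'s are independent of the $\gamma$'s, adding $w_k$ does not disturb the relations $\expect\gamma_j\beta_k=\langle x_j,y_k\rangle_\calH$, and building $w_k$ out of the conjugates $\bar\delta_m$ keeps $\expect\beta_k\beta_{k'}=0$, i.e.\ symmetry. The only real requirement on $w_k$ is that it repair the Gram matrix so that $\{\beta_k\}$ becomes orthonormal: writing $G$ for the matrix $G_{kk'}=\expect u_k\bar u_{k'}$, I need a family $\{w_k\}$ with Gram matrix $I-G$, which exists precisely when $I-G\succeq0$.

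The crux is therefore to verify $I-G\succeq0$, and this is the step I expect to be the main point. A direct computation gives $G_{kk'}=\sum_j\langle x_j,y_k\rangle_\calH\,\overline{\langle x_j,y_{k'}\rangle_\calH}$, so $G$ is the Gram matrix of the columns $\{Me_k\}$ of the cross-Gram matrix $M=(\langle x_j,y_k\rangle_\calH)_{j,k}$; hence $0\preceq G$ and $\|G\|=\|M\|^2$. Now $M$ is a contraction on $\ell^2$: it factors as the composition of the analysis coisometry $h\mapsto(\langle h,x_j\rangle_\calH)_j$ attached to the orthonormal system $\{x_j\}$ with the synthesis isometry $(c_k)\mapsto\sum_k c_k y_k$ attached to $\{y_k\}$, so $\|M\|\le1$. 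Thus $0\preceq G\preceq I$ and $I-G\succeq0$; taking $w_k:=\sum_m\big((I-G)^{1/2}\big)_{km}\,\bar\delta_m$ produces the required family. This is exactly where the orthonormality of \emph{both} systems enters: for general systems $M$ need not be contractive, $I-G$ could fail to be positive semidefinite, and the realization would break down.

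Finally, with $(\Phi,\Psi)$ so constructed, \eqref{eq-asvar1.01} and Theorem~\ref{thm-fund2} yield
\[
\sum_{l=2}^{+\infty}r^{2l}\bigg|\sum_{j,k:j+k=l}(jk)^{-\frac12}\langle x_j,y_k\rangle_\calH\bigg|^2
=\int_\Te|\expect\Phi(r\zeta)\Psi(r\zeta)|^2\diff s(\zeta)
\le 2r^2\log\frac1{1-r^2}+r^2 .
\]
Since $0\le r<1$ gives $2r^2\log\frac1{1-r^2}\le2\log\frac1{1-r^2}$ and $r^2\le1$, the right-hand side is at most $2\log\frac1{1-r^2}+2=2\log\frac{\e}{1-r^2}$, which is the asserted bound. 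The only nonroutine ingredient in this plan is the contractivity of $M$, everything else being bookkeeping with the Gaussian inner products.
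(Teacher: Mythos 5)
Your proof is correct, but it takes a genuinely different route from the paper's. Both arguments reduce the corollary to Theorem \ref{thm-fund2} via the identity \eqref{eq-asvar1.01}, and both ultimately rest on Bessel's inequality; the difference lies in how the cross-Gram data $\langle x_j,y_k\rangle_\calH$ gets realized. The paper stays on the operator side: it defines $\Tope^*f_j=\sum_k\langle y_j,x_k\rangle_\calH\,\bar f_k$, checks contractivity via $\|\Tope^*\sum_jc_jf_j\|_\D^2=\|\Pop_\calX\sum_jc_jy_j\|_\calH^2\le\|\sum_jc_jf_j\|_\D^2$, identifies the corollary's sum with the Taylor coefficients of the diagonal Dirichlet symbol $z^2\Zsymb[\Tope](z,z)$, and then appeals to Theorem \ref{thm-fund2} --- a step which tacitly routes through Theorem \ref{thm-transfer1}(b) to produce the GAF pair that Theorem \ref{thm-fund2} actually speaks about. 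You work on the probabilistic side instead: you build the pair $(\Phi,\Psi)$ by hand, reading off the forced $\Hspace_*$-component $u_k$ of $\beta_k$ and completing with a defect term $w_k$ whose Gram matrix is $I-G$, built from fresh independent Gaussians, the key positivity $I-G\succeq0$ coming from contractivity of the cross-Gram matrix $M$. In effect you have inlined, and re-proved in the special case needed, Theorem \ref{thm-transfer1}(b): the paper's proof of that theorem uses precisely your device, namely the defect operator $(\Iop_{\Xspace_*}-\Aop^\circledast\Aop)^{1/2}$ together with auxiliary Gaussians $\nu_k$. Your route buys self-containedness (no $L^2(\D)$ operators or Dirichlet symbols are needed, only \eqref{eq-asvar1.01} and Theorem \ref{thm-fund2}); the paper's route buys brevity given its existing machinery, and it exhibits the sum as the diagonal symbol of an explicit contraction, in line with the operator-symbol theme of the paper. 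Two harmless inaccuracies in your write-up: the analysis-composed-with-synthesis factorization produces the entrywise conjugate $\bar M$ rather than $M$ itself, and your $G$ is the conjugate-transpose variant of $M^*M$; neither affects norms or positivity, so the argument stands. The closing arithmetic $2r^2\log\frac1{1-r^2}+r^2\le2\log\frac1{1-r^2}+2=2\log\frac{\e}{1-r^2}$ matches the paper's implicit handling of the constants.
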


One possible interpretation of the corollary is that \emph{on average}, 
the sums
\[
\bigg|\sum_{j,k:j+k=l}\bigg(\frac{l}{jk}\bigg)^{\frac12}\langle x_j,
y_k\rangle_\calH\bigg|^2
\]
are bounded by $2$. 

%We shall also be interested in controlling the deviation away from this mean,
%which may approach using the \emph{square function}
%\[
%\int_0^r(1-t^2)|R_{\Phi\Psi}'(t\zeta)|^2 t\diff t,\qquad \zeta\in\Te,
%\]
%as $r\to1^-$, if $R_{\Phi\Psi}(z):=\expect\Phi(r\zeta)\Psi(r\zeta)$.  
%Here, we cannot
%expect the bound of Proposition \ref{prop-triang1} to help much. 
%Indeed, 
%there exist two holomorphic functions $a_1,a_2$ in $\D$ with 
%\[
%\epsilon_0\log\frac{1}{1-|z|^2}\le|a_1(z)|+|a_2(z)|\le
%\log\frac{1}{1-|z|^2}\,\qquad z\in\D,
%\]
%for some absolute constant $\epsilon_0>0$. For instance, this means that
%there exists Lebesgue measurable subsets $E_1,E_2\subset[0,1[$ with $E_1\cup
%E_2=[0,1[$ such that for $j=1,2$,
%\[
%\int_{\Te}|a_j(r\zeta)|^2\diff s(\zeta)\ge
%\frac{\epsilon_0}{2}^2\log\frac{1}{1-r^2},\qquad r\in E_j,
%\]
%while the bound 
%\[
%|a_j(z)|\le\log\frac{1}{1-|z|^2},\qquad z\in\D,
%\]
%This would suggest that the $L^2$-mean control ought to be of the same order 
%of magnitude as the pointwise bound of the function 
%$F(z)=\expect\Phi(z)\Psi(z)$. However, as it turns out we can do 
%\emph{much better}.

\subsection{The analytic correlation and Dirichlet operator 
symbols}
\label{subsec-symbols1.1}

For $w\in\D$, let $\szeg_w$ denotes the 
%(modified) 
Szeg\H{o} kernel
\begin{equation}
\szeg_z(\zeta):=\frac{1}{1-\bar z \zeta}.
\label{eq-szego1}
\end{equation}
For functions in the Bergman space $A^2(\D)$, taking the inner product 
with $s_\zeta$ is the same as finding the average
\begin{equation}
\langle f,\szeg_z\rangle_\D=\int_0^1 f(zt)\diff t,\qquad f\in A^2(\D).
\end{equation}

\begin{defn}
Let $\Tope$ be a bounded $\C$-linear operator on $L^2(\D)$. 
The \emph{Dirichlet operator symbol} associated with $\Tope$ 
is the function
\begin{equation*}
\Zsymb[\Tope](z,w):=\langle \Tope(\bar \szeg_z),\szeg_w\rangle_\D,
\qquad z,w\in\D,
%\label{eq-symbol1}
\end{equation*}
which is holomorphic in $\D^2$, with diagonal restriction 
\begin{equation*}
\oslash \Zsymb[\Tope](z)=\langle \Tope(\bar \szeg_z),\szeg_z\rangle_\D,
\qquad z\in\D.
%\label{eq-symbol1.1}
\end{equation*}
\end{defn}

\begin{rem}
If $\Tope=\Mop_\mu$, the operator of multiplication by $\mu\in L^\infty(\D)$,
then 
\begin{equation}
\oslash \Zsymb[{\Mop_\mu}](z)=
\langle \Mop_\mu(\bar \szeg_z),\szeg_z\rangle_\D=\int_\D
\frac{\mu(\xi)\diff A(\xi)}{(1-z\bar\xi)^2},\qquad z\in\D,
\label{eq-Bergman1}
\end{equation}
which shows that $\oslash\Zsymb[\Tope]$ is a generalization of the
Bergman projection to the setting of general bounded operators. There is
a way to write $\Zsymb[\Tope]$ which makes the analogy with \eqref{eq-Bergman1}
clearer:
\[
\Zsymb[\Tope](z,w)=\langle\Tope,\szeg_z\otimes\szeg_w\rangle_{\mathrm{tr}}.
\]
Here, we use the bilinear tensor product 
$(f\otimes g)(h)=\langle h,\bar g\rangle f$, and the notation
$\langle A,\Bop\rangle_{\mathrm{tr}}=\mathrm{tr}(\Aop\bar \Bop)=
\mathrm{tr}(\bar \Bop\Aop)$ for the trace inner product.
\end{rem}

The next result characterizes the analytic correlations $\expect\Phi(z)\Psi(w)$
as the Dirichlet symbols associated with contractions on $L^2(\D)$. 

\begin{thm}
%If $\Sop$ is as above, then 
\noindent{\rm(a)} 
Given a pair of jointly Gaussian $\calD_0$-GAFs $(\Phi(z),\Psi(z))$ 
% the form \eqref{eq-Gauss2.1}, 
there exists a norm contraction $\Tope:L^2(\D)\to L^2(\D)$ such that
\begin{equation*}
\expect\Phi(z)\Psi(w)
=zw\langle\Tope \bar \szeg_z,\szeg_w\rangle_\D\quad
%\text{and}\quad
%\expect\Phi(z)\bar\Psi(w)=
%\langle\xi_j,\Sope\bar\xi_k\rangle=
%z\bar w\langle\Tope\bar \szeg_z, \bar \szeg_w\rangle_\D.
\qquad z,w\in\D,
%\label{eq-bilin1}
\leqno{\mathrm{(i)}}
\end{equation*}
%where the $e_j\in \calD_0(\D)$ form the orthonormal sequence 
%$e_j(z)=j^{-1/2}z^{j}$ for $j=1,2,3,\ldots$. 

\noindent{\rm(b)}  Given a norm contraction $\Tope$ 
on $L^2(\D)$, there exists a pair of jointly Gaussian $\calD_0$-GAFs 
$(\Phi(z),\Psi(z))$ 
%of the form \eqref{eq-Gauss2.1} 
such that {\rm(i)} holds.
\label{thm-transfer1}
\end{thm}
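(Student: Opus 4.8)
The plan is to prove Theorem \ref{thm-transfer1} by expressing the analytic correlation $\expect\Phi(z)\Psi(w)$ in terms of the inner products $\langle\alpha_j,\bar\beta_k\rangle_\Omega$ and recognizing the resulting bilinear form as a Dirichlet symbol. First I would compute directly from \eqref{eq-Gauss2.1} that
\begin{equation*}
\expect\Phi(z)\Psi(w)=\sum_{j,k=1}^{+\infty}
\frac{\langle\alpha_j,\bar\beta_k\rangle_\Omega}{\sqrt{jk}}\,z^j w^k.
\end{equation*}
Writing $a_{jk}:=\langle\alpha_j,\bar\beta_k\rangle_\Omega=\expect(\alpha_j\beta_k)$, the matrix $(a_{jk})_{j,k\ge1}$ is exactly the Gram-type matrix recording the correlation structure. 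The key arithmetic fact I would establish is that the pairing $\langle\Tope\bar\szeg_z,\szeg_w\rangle_\D$, when $\Tope$ is represented in a suitable orthonormal basis of $L^2(\D)$, produces the monomial expansion $\sum_{j,k}c_{jk}z^{j-1}w^{k-1}$ with the matrix entries $c_{jk}$ of $\Tope$ relative to that basis. Pulling out the factor $zw$ then matches the index shift.

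For direction (a), I would expand $\bar\szeg_z$ and $\szeg_w$ in the monomial-derived orthonormal basis of the Bergman space $A^2(\D)$, namely $f_m(\xi):=\sqrt{m+1}\,\xi^m$ for $m\ge0$, using $\szeg_w(\xi)=\sum_m w^m\xi^m$ and $\bar\szeg_z(\xi)=\sum_n \bar z^{\,n}\bar\xi^{\,n}$. The plan is to build $\Tope$ so that its matrix entries in this basis reproduce $a_{jk}$ up to the normalizing factors $\sqrt{(m+1)(n+1)}$ and the index shift by one. Concretely, I would define $\Tope$ on $A^2(\D)$ by specifying $\langle\Tope f_n, f_m\rangle_\D$ to equal the appropriately rescaled $a_{j k}$ and then extend $\Tope$ by zero on the orthogonal complement of $A^2(\D)$ in $L^2(\D)$. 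The crucial point is that $\Tope$ must be a \emph{norm contraction}: this is where the positive semidefiniteness of the $8\times8$ matrix \eqref{eq-4X4.1}, equivalently the joint Gaussian correlation structure, is used. I would show that the matrix $(a_{jk})$ arises from two orthonormal systems in a Hilbert space, so $a_{jk}=\langle\alpha_j,\bar\beta_k\rangle_\Omega$ with $\{\alpha_j\}$ and $\{\bar\beta_k\}$ each orthonormal; such a matrix factors as $A=U^*V$ with $U,V$ isometries (columns indexed by the respective systems), forcing $\|A\|\le1$. Carrying this norm bound through the basis rescaling gives contractivity of $\Tope$.

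For direction (b), I would reverse the construction: given a norm contraction $\Tope$ on $L^2(\D)$, compress to $A^2(\D)$, read off the matrix entries $c_{mn}=\langle\Tope f_n,f_m\rangle_\D$, rescale to obtain a contractive matrix $(a_{jk})$, and then realize $(a_{jk})$ as $\langle\alpha_j,\bar\beta_k\rangle_\Omega$ for suitable jointly Gaussian $\calD_0$-GAFs. The realization step uses the dilation/factorization of a contraction: a contractive matrix $A$ can be written $A=U^*V$ for isometries $U,V$ into an auxiliary Hilbert space, and one then transports $U,V$ into a sufficiently large Gaussian Hilbert space $\Gspace$ to manufacture the families $\{\alpha_j\}$ and $\{\beta_k\}$ with the prescribed cross-correlations while keeping each family i.i.d.\ standard complex Gaussian; conjugation-invariance of $\Gspace$ and its infinite dimension guarantee enough room. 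The main obstacle I anticipate is the bookkeeping around contractivity: matching the two normalizations (the $1/\sqrt{jk}$ from the Dirichlet basis versus the $\sqrt{(m+1)(n+1)}$ from the Bergman basis) so that the rescaling preserves the operator-norm bound in both directions, and verifying that compressing a general $L^2(\D)$-contraction to $A^2(\D)$ does not inflate the norm (it does not, since the compression $P_{A^2}\Tope|_{A^2}$ of a contraction is again a contraction). Once the normalization and the contraction-factorization are aligned, both directions follow from the same underlying identity relating matrix entries to the symbol pairing.
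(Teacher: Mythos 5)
Your overall strategy is the same as the paper's: in part (a) you define $\Tope$ on $A^2(\D)$ through its matrix entries against the Bergman basis, extend it by zero on $L^2(\D)\ominus A^2(\D)$, and obtain contractivity because the cross-Gram matrix $\big(\langle\alpha_j,\bar\beta_k\rangle_\Omega\big)$ of two orthonormal systems factors through isometries; in part (b) you run this backwards via the defect/dilation construction, placing the defect vectors on fresh Gaussians so that the family $\{\beta_k\}$ stays i.i.d. This is exactly the paper's proof: there $\Pop_\Xspace\Sope$ plays the role of your $U^*V$, and the defect operator $\Dop=(\Iop_{\Xspace_*}-\Aop^\circledast\Aop)^{1/2}$ together with the auxiliary space $\Mspace$ implements your ``transport'' into a sufficiently large Gaussian Hilbert space.

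However, your ``key arithmetic fact'' is mis-stated, and the mis-statement conceals the one computation the theorem actually turns on. With $f_j(\xi)=j^{1/2}\xi^{j-1}$ the Bergman basis and $e_j(z)=j^{-1/2}z^j$ the Dirichlet basis, the correct expansion (this is \eqref{eq-kerneldecomp1}) gives
\[
\bar z\,\szeg_z(\zeta)=\sum_{j=1}^{+\infty}\bar e_j(z)f_j(\zeta),
\qquad\text{hence}\qquad
zw\,\langle\Tope\bar\szeg_z,\szeg_w\rangle_\D
=\sum_{j,k=1}^{+\infty}\frac{\langle\Tope\bar f_j,f_k\rangle_\D}{\sqrt{jk}}\,z^jw^k,
\]
not $\sum_{j,k}c_{jk}z^{j-1}w^{k-1}$ with the bare matrix entries, as you wrote. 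The Bergman normalization $\sqrt{jk}$ and the Dirichlet weight $(jk)^{-1/2}$ cancel \emph{identically}, so the matching condition is simply $\langle\Tope\bar f_j,f_k\rangle_\D=\expect(\alpha_j\beta_k)$, with no rescaling whatsoever. This is not mere bookkeeping. Under your stated version of the fact, part (a) would force the entries $c_{jk}=\expect(\alpha_j\beta_k)/\sqrt{jk}$, which is still a contraction (you multiply on both sides by the contractive diagonals $\mathrm{diag}(j^{-1/2})$ and $\mathrm{diag}(k^{-1/2})$); but part (b) would then require the rescaled matrix $\big(\sqrt{jk}\,c_{jk}\big)$ to be a contraction, and multiplying entries by the unbounded factors $\sqrt{jk}$ does not preserve (or even guarantee) boundedness — already a rank-one $\Tope$ defeats it. So the clean equivalence ``norm contraction on $L^2(\D)$ $\leftrightarrow$ admissible analytic correlation,'' with constant exactly $1$, survives only because the cancellation is exact; this is the step you must verify rather than flag as an anticipated obstacle, and once inserted your argument coincides with the paper's. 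One further detail worth spelling out in part (b): orthonormality of $\{\bar\beta_k\}$ alone does not make $\{\beta_k\}$ i.i.d.\ standard complex Gaussians — you also need the symmetry relations $\expect(\beta_j\beta_k)=0$. The paper secures these by putting the defect vectors $\tilde\Dop\nu_k$ in the span $\Mspace$ of fresh i.i.d.\ Gaussians, so that $\bar\beta_j\in\Xspace\oplus\Mspace$ is orthogonal to $\beta_k\in\Xspace_*\oplus\Mspace_*$; your transport achieves the same provided the auxiliary copy of $\ell^2$ is mapped onto such an $\Mspace$, orthogonal to $\Xspace\oplus\Xspace_*$ and to its own conjugate space.
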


In particular, we see that in the sense of the theorem, the analytic 
correlations $\expect\Phi(z)\Psi(w)$ may be identified with the 
%diagonal restrictions of 
Dirichlet operator symbols of contractions on $L^2\D)$:
\[
\expect\Phi(z)\Psi(w)=zw \Zsymb[\Tope](z,w).
\]

\subsection{Analytic correlations and the Bloch space} 
The \emph{Bloch space} $\calB(\D)$ consists of all complex-valued holomorphic
functions $f:\D\to\C$ such that 
\[
\|f\|_{\calB}:=\sup_{z\in\D}(1-|z|^2)|f'(z)|<+\infty.
\]  
This defines a seminorm on $\calB(\D)$, since constants get seminorm $0$.

\begin{defn}
The \emph{mock-Bloch space} $\calB^{\text{mock}}(\D)$ is the space of 
functions 
\[
\big\{\oslash\Zsymb[\Tope]:\,\,\Tope\,\,\, 
\text{is a bounded operator on}\,\,\,L^2(\D)\big\}.
\] 
\end{defn}

This mock-Bloch space is naturally endowed with a norm, which equals the
infimum of $\|\Tope\|$ over all operators $\Tope$ representing the same symbol
$\oslash\Zsymb[\Tope]$.
All functions in $\calB(\D)$ are in $\calB^{\text{mock}}(\D)$. 
This is well-known an easy to see using multiplication operators 
$\Mop_\mu$, as in \cite{Hed1} (compare with \eqref{eq-Bergman1}). On the 
other hand, is $\calB^{\text{mock}}(\D)$ contained in $\calB(\D)$? This is 
answered in the negative by the following.

\begin{thm}
There exists a function $f\in\calB^{\mathrm{mock}}(\D)$ which is not in
$\calB(\D)$.
\label{thm-notbloch}
\end{thm}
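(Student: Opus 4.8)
The plan is to deduce the strict non-containment from a failure of norm equivalence via the open mapping theorem. Recall from the remark preceding the statement that every $f\in\calB(\D)$ is represented by a multiplication operator $\Mop_\mu$ with $\|\mu\|_\infty\lesssim\|f\|_\calB$, so the inclusion $\calB(\D)\hookrightarrow\calB^{\mathrm{mock}}(\D)$ is bounded. Both are Banach spaces modulo constants: $\calB(\D)$ under the Bloch seminorm, and $\calB^{\mathrm{mock}}(\D)$ under the quotient norm $\|f\|_{\mathrm{mock}}=\inf\{\|\Tope\|:\oslash\Zsymb[\Tope]=f\}$, which is complete because the kernel of the continuous map $\Tope\mapsto\oslash\Zsymb[\Tope]$ is closed. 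If $\calB^{\mathrm{mock}}(\D)$ were contained in $\calB(\D)$, the two would coincide as sets; since one norm dominates the other, the open mapping theorem would then force the norms to be equivalent. Hence it suffices to produce a sequence $f_N\in\calB(\D)$ with $\|f_N\|_{\mathrm{mock}}\le1$ but $\|f_N\|_\calB\to+\infty$.

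To build such $f_N$ I would use the coefficient description of the diagonal symbol. Expanding $\szeg_z=\sum_{n\ge0}\bar z^n\zeta^n$ and $\bar\szeg_z=\sum_{m\ge0}z^m\bar\zeta^m$ and introducing the orthonormal systems $\phi_m:=\sqrt{m+1}\,\bar\zeta^m$ and $\psi_n:=\sqrt{n+1}\,\zeta^n$ in $L^2(\D)$, a direct computation gives
\[
\oslash\Zsymb[\Tope](z)=\sum_{l\ge0}\widehat f(l)\,z^l,\qquad
\widehat f(l)=\sum_{m+n=l}\frac{\langle\Tope\phi_m,\psi_n\rangle_\D}{\sqrt{(m+1)(n+1)}}.
\]
Because $\{\phi_m\}$ and $\{\psi_n\}$ are orthonormal and any prescribed bounded matrix $(\langle\Tope\phi_m,\psi_n\rangle_\D)$ is realized by an operator of exactly the same norm, $\|f\|_{\mathrm{mock}}$ equals the least operator norm of a matrix whose $(jk)^{-1/2}$-weighted anti-diagonal sums reproduce the Taylor coefficients $\widehat f(l)$. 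The Bloch seminorm $\sup_z(1-|z|^2)|f'(z)|$, by contrast, is a \emph{coherent} quantity, largest at the boundary point where the terms $l\,\widehat f(l)z^{l-1}$ align in phase. The strategy is to choose $f_N$ supported on a band of frequencies near a scale $N$ (so that $f_N$ is a polynomial, automatically in $\calB(\D)$) for which this coherent sum makes $(1-|z|^2)|f_N'(z)|$ large at a boundary point, while the anti-diagonal mass is distributed across the matrix so that the least operator norm stays bounded; the latter I would track through the operator/trace-norm duality $\|f\|_{\mathrm{mock}}=\sup_\lambda\bigl|\sum_l\lambda_l\widehat f(l)\bigr|\big/\bigl\|\sum_l\lambda_l C_l\bigr\|_{\mathrm{tr}}$, where $C_l$ is the matrix carrying the entries $(m+1)^{-1/2}(n+1)^{-1/2}$ on the anti-diagonal $m+n=l$ and zero elsewhere.

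The main obstacle is precisely this estimate: exhibiting a family for which the infimal operator norm remains bounded while the Bloch seminorm diverges. This is where Theorem \ref{thm-fund2} enters conceptually, since it guarantees that the circle-$L^2$ averages of $z^2f$, and hence the asymptotic variance, stay controlled by $\|\Tope\|$; consequently any divergence of $\sup_z(1-|z|^2)|f_N'(z)|$ must come from the pointwise supremum rather than from an $L^2$ blow-up, which is exactly what distinguishes $\calB^{\mathrm{mock}}(\D)$ from $\calB(\D)$. The technical heart is therefore a singular-value analysis of the weighted anti-diagonal operators $C_l$, isolating the gap between their trace norms and the coherent Bloch sum, so that a single bounded operator can generate the required coefficient pattern. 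Once a family $\{f_N\}$ with $\|f_N\|_{\mathrm{mock}}\le1$ and $\|f_N\|_\calB\to+\infty$ is in hand, the open mapping argument of the first paragraph produces a function in $\calB^{\mathrm{mock}}(\D)\setminus\calB(\D)$, which is the assertion of the theorem.
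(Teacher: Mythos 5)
Your proposal has a genuine gap: the construction that would carry the entire proof is never produced. The open-mapping reduction in your first paragraph is sound (both spaces are complete modulo constants, point evaluations are continuous in both norms, so if $\calB^{\mathrm{mock}}(\D)\subseteq\calB(\D)$ the two norms would be equivalent), and the coefficient formula for $\oslash\Zsymb[\Tope]$ in your second paragraph is correct. But everything then hinges on exhibiting a family $f_N$ with $\|f_N\|_{\mathrm{mock}}\le1$ and $\|f_N\|_{\calB}\to+\infty$, and at exactly this point you write that the ``technical heart'' is a singular-value analysis of the anti-diagonal matrices $C_l$ which you do not carry out. Describing where the difficulty lies is not the same as resolving it; as written, the argument establishes nothing beyond the (standard) reduction, and it is not at all clear that the route you sketch --- band-limited polynomials with coherent phases, controlled via trace-norm duality --- can be pushed through. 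Your appeal to Theorem \ref{thm-fund2} is also only heuristic: that theorem bounds $L^2$ circle averages of diagonal symbols, but it plays no logical role in your reduction and does not help produce the family $f_N$.

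The missing idea, which is how the paper proceeds, is to use a \emph{rank-one} operator. If $\Tope=u\otimes v$ is rank one, then $\oslash\Zsymb[\Tope]$ is (up to the normalization by $z^2$) a \emph{product of two Dirichlet-space functions} $F\cdot G$ with $F'=u$, $G'=v$; in particular every such product lies in $\calB^{\mathrm{mock}}(\D)$ with mock norm at most $\|u\|_{L^2}\|v\|_{L^2}$. The paper then takes a sparse sequence $r_j\uparrow1$ and sets
\[
f(z)=\sum_{j\ge1}j^{-1}(1-r_j^2)\frac{z}{1-r_jz},\qquad
g(z)=\sum_{j\ge1}\frac{j^{-1}}{\sqrt{\log\frac{1}{1-r_j^2}}}\log\frac{1}{1-r_jz},
\]
verifies $\|f\|_\nabla,\|g\|_\nabla<+\infty$ for a sufficiently lacunary choice of $\{r_j\}$, and shows $(1-r_l^2)f'(r_l)g(r_l)\ge l^{-2}\sqrt{\log\frac{1}{1-r_l^2}}\to+\infty$; since $f$ and $g$ have nonnegative Taylor coefficients, $(fg)'\ge f'g$ on $[0,1)$, so $fg\notin\calB(\D)$, while $fg=\oslash\Zsymb[\Tope]$ for a rank-one $\Tope$. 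Note that this produces a single explicit function in $\calB^{\mathrm{mock}}(\D)\setminus\calB(\D)$, making your open-mapping scaffolding unnecessary. If you want to salvage your plan, the shortest repair is to replace the unexecuted singular-value analysis by this rank-one observation: products of Dirichlet functions are uniformly mock-Bloch, but they need not be Bloch.
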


It is known that $\calB(\D)$ is maximal among M\"obius-invariant spaces
\cite{RubTim}, so $\calB^{\text{mock}}(\D)$ cannot be M\"obius-invariant in
the standard sense. 
For a M\"obius automorphism $\phi:\D\to\D$, let 
\begin{equation}
\Uop_\phi f(z):=\phi'(z)f\circ\phi(z),\quad
\bar\Uop_\phi f(z):=\bar\phi'(z)f\circ\phi(z),
\label{eq-unitaries1.2}
\end{equation}
be the associated unitary transformations of $L^2(\D)$.

\begin{thm}
For a M\"obius automorphism $\varphi:\D\to\D$, and a bounded operator
$\Tope$ on $L^2(\D)$, we write
%let $\Vop_\phi f(z):=
%\phi'(z)\,f\circ\phi(z)$, which acts unitarily on $L^2(\D)$. Then if
%$\Tope$ is bounded on $L^2(\D)$, and 
$\Tope_\phi:=\Uop_\phi\Tope\bar\Uop_\phi^*$, which has the same norm as $\Tope$.
If we write $\Wsymb[\Tope](z,w):=zw\Zsymb[\Tope](z,w)$ and 
$\oslash\Wsymb[\Tope](z):=z^2 \Zsymb[\Tope](z,z)$, we then have the identity
\[
\oslash\Wsymb[{\Tope_\phi}](z)=\oslash \Wsymb[\Tope]\circ\phi(z)-
\Wsymb[\Tope](\phi(z),\phi(0))-\Wsymb[\Tope](\phi(0),\phi(z))+
\oslash \Wsymb[\Tope](\phi(0)).
\] 
\label{thm-mockbloch}
\end{thm}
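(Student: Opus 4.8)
The plan is to prove the stronger two-variable identity
\[
\Wsymb[\Tope_\phi](z,w)=F(\phi(z),\phi(w))-F(\phi(z),\phi(0))-F(\phi(0),\phi(w))+F(\phi(0),\phi(0)),\qquad F:=\Wsymb[\Tope],
\]
and then set $w=z$: since $\oslash\Wsymb[\Tope_\phi](z)=\Wsymb[\Tope_\phi](z,z)$, while $\oslash\Wsymb[\Tope]\circ\phi(z)=F(\phi(z),\phi(z))$ and the three remaining terms of the asserted formula are exactly $F(\phi(z),\phi(0))$, $F(\phi(0),\phi(z))$ and $F(\phi(0),\phi(0))$, the theorem is precisely the diagonal restriction of the displayed identity. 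Write $R(z,w)$ for its right-hand side. Probabilistically $R$ is the analytic correlation $\expect\tilde\Phi(z)\tilde\Psi(w)$ of the recentered fields $\tilde\Phi:=\Phi\circ\phi-\Phi(\phi(0))$ and $\tilde\Psi:=\Psi\circ\phi-\Psi(\phi(0))$, which is a good heuristic, but I shall argue entirely on the operator side so as to cover every bounded $\Tope$ at once.

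Both $\Wsymb[\Tope_\phi]$ and $R$ are holomorphic on $\D^2$ and vanish whenever $z=0$ or $w=0$: for $\Wsymb[\Tope_\phi]$ by the explicit prefactor $zw$, and for $R$ because $\phi(z)$ collapses to $\phi(0)$ there and the four terms cancel in pairs. A holomorphic $G$ on $\D^2$ with $\partial_z\partial_w G\equiv0$ is of the form $a(z)+b(w)$, and the boundary vanishing then forces $G\equiv0$; hence it suffices to match the mixed derivatives $\partial_z\partial_w$. The computation rests on one bookkeeping identity valid for every bounded $\Tope$ on $L^2(\D)$,
\[
\partial_z\partial_w\Wsymb[\Tope](z,w)=\langle\Tope\overline{K_z},K_w\rangle_\D,\qquad K_w(\zeta):=\frac{1}{(1-\zeta\bar w)^2},
\]
where $K_w$ is the Bergman reproducing kernel. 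One obtains this by expanding $\bar\szeg_z=\sum_m z^m\bar\zeta^m$ and $\szeg_w=\sum_n\bar w^n\zeta^n$ in $\Wsymb[\Tope]=zw\Zsymb[\Tope]$, differentiating term by term (legitimate since $\Tope$ is bounded and the series converge locally uniformly in $\D^2$), and recognising $\sum_m(m+1)z^m\bar\zeta^m=\overline{K_z}(\zeta)$. Applying it to $\Tope_\phi$ gives $\partial_z\partial_w\Wsymb[\Tope_\phi]=\langle\Tope_\phi\overline{K_z},K_w\rangle_\D$, while the chain rule on $R$ (its three lower-order terms die under $\partial_z\partial_w$) together with the same identity for $\Tope$ gives $\partial_z\partial_w R=\phi'(z)\phi'(w)\langle\Tope\,\overline{K_{\phi(z)}},K_{\phi(w)}\rangle_\D$. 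Thus the whole theorem reduces to the single operator identity $\langle\Tope_\phi\overline{K_z},K_w\rangle_\D=\phi'(z)\phi'(w)\langle\Tope\,\overline{K_{\phi(z)}},K_{\phi(w)}\rangle_\D$.

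The heart of the matter is therefore the transformation law of the Bergman kernel under the unitaries $\Uop_\phi$, which restrict to a unitary of $A^2(\D)$; hence so does $\Uop_\phi^*$. From the reproducing property $\langle f,K_w\rangle_\D=f(w)$ we get, for $f\in A^2(\D)$,
\[
\langle\Uop_\phi f,K_w\rangle_\D=\phi'(w)f(\phi(w))=\langle f,\overline{\phi'(w)}K_{\phi(w)}\rangle_\D,
\]
whence $\Uop_\phi^*K_w=\overline{\phi'(w)}K_{\phi(w)}$; and the elementary relation $(\bar\Tope)^*=\overline{\Tope^*}$, applied to $\Uop_\phi$, yields likewise $\bar\Uop_\phi^*\overline{K_z}=\phi'(z)\overline{K_{\phi(z)}}$. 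Substituting $\Tope_\phi=\Uop_\phi\Tope\bar\Uop_\phi^*$ and moving $\Uop_\phi$ across the inner product then gives
\[
\langle\Tope_\phi\overline{K_z},K_w\rangle_\D=\langle\Tope\,\bar\Uop_\phi^*\overline{K_z},\Uop_\phi^*K_w\rangle_\D=\langle\Tope(\phi'(z)\overline{K_{\phi(z)}}),\overline{\phi'(w)}K_{\phi(w)}\rangle_\D=\phi'(z)\phi'(w)\langle\Tope\,\overline{K_{\phi(z)}},K_{\phi(w)}\rangle_\D,
\]
the factor $\overline{\phi'(w)}$ re-conjugating to $\phi'(w)$ as it is pulled out of the conjugate-linear slot. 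This is exactly the identity isolated above, so $\partial_z\partial_w\Wsymb[\Tope_\phi]=\partial_z\partial_w R$, hence $\Wsymb[\Tope_\phi]=R$, and the diagonal restriction is the assertion. (The claimed norm equality $\|\Tope_\phi\|=\|\Tope\|$ is immediate, since $\Uop_\phi$ and $\bar\Uop_\phi^*$ are unitary.) I expect the only genuine pitfall to be the conjugation bookkeeping around $\bar\Uop_\phi^*$ and the careful placement of $\phi'$ versus $\overline{\phi'}$; once the Bergman-kernel transformation law is secured, the rest is routine.
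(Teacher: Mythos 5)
Your proof is correct, but it follows a genuinely different route from the paper's. After the common first step of moving $\Uop_\phi$ across the inner product, the paper invokes its Lemma \ref{lem-Mob1.1}, the affine transformation law for the Szeg\H{o} kernel,
\[
\bar w\,\Uop_\phi^*\szeg_w=\bar\phi(w)\,\szeg_{\phi(w)}-\bar\phi(0)\,\szeg_{\phi(0)},
\]
and substitutes it into both slots of $z^2\langle\Tope\bar\Uop_\phi^*\bar\szeg_z,\Uop_\phi^*\szeg_z\rangle_\D$; the bilinear expansion then delivers all four terms of the identity at once, the corrections at $\phi(0)$ being built into the kernel law itself. You instead pass to mixed derivatives via the identity $\partial_z\partial_w\Wsymb[\Tope](z,w)=\langle\Tope\overline{K_z},K_w\rangle_\D$ with $K_w$ the Bergman kernel, use the exact, correction-free covariance $\Uop_\phi^*K_w=\overline{\phi'(w)}K_{\phi(w)}$, and recover the four-term affine structure by integrating back, using vanishing on the coordinate axes $\{z=0\}\cup\{w=0\}$ (calling this ``boundary vanishing'' is a slight misnomer, but your argument is the right one: $\partial_z\partial_w G\equiv0$ forces $G(z,w)=a(z)+b(w)$, and the axis values kill $a$ and $b$). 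The two kernel laws are in fact equivalent --- applying $\partial_{\bar w}$ to the paper's Szeg\H{o} lemma yields your Bergman law --- which explains why the correction terms disappear in your differentiated picture. What your route buys: the two-variable identity for $\Wsymb[\Tope_\phi](z,w)$ is stated and proved explicitly (the paper's computation gives it as well, but only the diagonal case is written down), and it isolates the conceptual point that the mixed derivative of the Dirichlet symbol is an exactly M\"obius-covariant object, while the symbol itself is covariant only modulo Dirichlet-type corrections. What it costs: the extra uniqueness step, and the need to verify that $\Uop_\phi^*K_w$ lies in $A^2(\D)$ so that testing against $A^2(\D)$ suffices --- a point you do address by noting that $\Uop_\phi$ restricts to a unitary of the Bergman space. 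Your conjugation bookkeeping, $(\bar\Tope)^*=\overline{\Tope^*}$ and $\bar\Uop_\phi^*\overline{K_z}=\phi'(z)\overline{K_{\phi(z)}}$, also checks out.
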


Typically, in M\"obius-invariant spaces, the correction after a M\"obius
transform amounts to the subtraction of an appropriate constant.
Here, we instead subtract a function in the Dirichlet space.

\begin{rem}
The mock-Bloch space is intimately connected with the Hankel forms on the 
Dirichlet space studied by Arcozzi, Rochberg, Sawyer, and Wick (see Subsection
6.2 of \cite{ARSW}). In a sense that space of Hankel forms is predual to 
the mock-Bloch space. To be more precise, let 
$b(z)=\sum_{l=2}^{+\infty}\hat b(l)z^l$, and observe that
\[
\int_\D \oslash\Wsymb[\Tope](z)\bar b'(z)\diff A(z)=
\sum_{j,k=1}^{+\infty}\frac{\overline{\hat b(j+k)}}{\sqrt{jk}}\,\langle \Tope 
\bar f_j,f_k\rangle_\D,
\]
where $f_j(z)=jz^{j-1}$, $j=1,2,3,\ldots$, is the standard orthonormal basis 
in $A^2(\D)$. This means that $b'$ is in the predual space of 
the mock-Bloch space if and only if the infinite matrix 
$\{(jk)^{-1/2}\hat b(j+k)\}_{j,k=1}^{+\infty}$ is trace class. This supplies the 
connexion with Theorem 8 of \cite{ARSW}.
\end{rem}

\subsection{Symbols of Grunsky operators}

Let $\varphi:\D\to\C$ be a univalent function. In other words, $\varphi$ is
a conformal mapping onto a simply connected domain. The associated \emph{Grunsky
operator} $\boldG_\varphi$ is given by the expression
\begin{equation}
\boldG_\varphi f(z):=\int_\D\bigg(
\frac{\varphi'(z)\varphi'(w)}{(\varphi(z)-\varphi(w))^2}
-\frac{1}{(z-w)^2}\bigg)\,f(w)\diff A(w),\qquad z\in\D.
\label{eq-Grunskyop1}
\end{equation}
It is well-known that $\boldG_\varphi$ is a norm contraction on $L^2(\D)$, 
and it maps into the Bergman space $A^2(\D)$. This contractiveness is 
called the \emph{Grunsky inequalities}, and in this form it was studied 
in, e.g.,  \cite{BarHed}. For a given $\varphi$, we may consider instead 
the normalized mapping
\[
\tilde\varphi(z)=\frac{\varphi(z)-\varphi(0)}{\varphi'(0)},
\]
which has $\tilde\varphi(0)=0$ and $\tilde\varphi'(0)=1$. It is easy 
to see that $\boldG_{\tilde\varphi}=\boldG_\varphi$, so we might as well 
replace $\varphi$ by its normalized variant $\tilde\varphi$, and 
require of $\varphi$ that $\varphi(0)=0$ and $\varphi'(0)=1$. 
The Dirichlet symbol associated with $\boldG_\varphi$ is then
\begin{equation}
\Wsymb[\boldG_\varphi](z,w)=zw\,\Zsymb[\boldG_\varphi](z,w)=
\log\frac{zw(\varphi(z)-\varphi(w))}{(z-w)\varphi(z)\varphi(w)},
\qquad(z,w)\in\D^2,
\label{eq-WboldG}
\end{equation}
with diagonal restriction
\[
\oslash \Wsymb[\boldG_\varphi](z)=z^2\,\oslash \Zsymb[\boldG_\varphi](z)
=\log\frac{z^2\varphi'(z)}{(\varphi(z))^2},\qquad z\in\D.
\]
We want to characterize the Dirichlet symbols of the above form 
\eqref{eq-WboldG} among all Dirichlet symbols $\Wsymb[\Tope](z,w)$ of 
norm contractions $\Tope$ on $L^2(\D)$. 

\begin{thm}
A function $Q=Q(z,w)$ which is holomorphic on $\D^2$ 
is of the form $\Wsymb[\boldG_\varphi](z,w)$ for a normalized univalent function 
$\varphi:\D\to\C$ if and only if 

\noindent{\rm (a)} $Q(0,w)\equiv0$ and $Q(z,0)\equiv0$, and 

\noindent{\rm (b)}  $Q=Q(z,w)$ solves the nonlinear wave equation
\[
\partial_z\partial_w Q+(\partial_z Q)(\partial_w Q)
-\frac{z^2\partial_z Q-w^2\partial_w Q}{zw(z-w)}=0.
\]
\label{thm-NLW}
\end{thm}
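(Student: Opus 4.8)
The plan is to linearize the nonlinear wave equation by the substitution $G:=\e^{Q}$. Writing $\partial_zQ=\partial_zG/G$ one computes $\partial_z\partial_wQ+(\partial_zQ)(\partial_wQ)=\partial_z\partial_wG/G$, so after multiplying (b) through by $G\,zw(z-w)$ the equation becomes the \emph{linear} one
\[
zw(z-w)\,\partial_z\partial_wG=z^2\partial_zG-w^2\partial_wG.
\]
Condition (a) reads $G(0,w)\equiv G(z,0)\equiv1$, while holomorphy of $Q$ on $\D^{2}$ amounts to $G$ being zero-free and holomorphic there. I would then set $N:=(z-w)G$, which is holomorphic on $\D^{2}$ and vanishes on the diagonal. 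A short computation (substitute $G=N/(z-w)$ and cancel a common factor $(z-w)^{2}$) shows the displayed equation is equivalent to the Euler-type equation
\[
(\theta_z-1)(\theta_w-1)N=0,\qquad\theta_z:=z\partial_z,\quad\theta_w:=w\partial_w.
\]

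On Taylor coefficients $N=\sum c_{mn}z^mw^n$ this says $(m-1)(n-1)c_{mn}=0$, hence $c_{mn}=0$ unless $m=1$ or $n=1$; combined with $N(z,z)\equiv0$ it forces
\[
N(z,w)=z\sigma(w)-w\sigma(z)
\]
for a single holomorphic $\sigma$ on $\D$, determined up to $\sigma\mapsto\sigma+cz$, with $G(0,w)=\sigma(0)=1$. Thus
\[
G(z,w)=\frac{z\sigma(w)-w\sigma(z)}{z-w}=-zw\,\frac{m(z)-m(w)}{z-w},\qquad m:=\frac{\sigma}{z},
\]
where $m$ is meromorphic on $\D$ with a single simple pole at $0$ of residue $1$. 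The necessity direction is then immediate: for a normalized univalent $\varphi$ the argument of the logarithm in \eqref{eq-WboldG} equals this $G$ with $\sigma=z/\varphi$ (using $\varphi(z)^{-1}-\varphi(w)^{-1}=(\varphi(w)-\varphi(z))/(\varphi(z)\varphi(w))$), so (a) holds, and (b) holds because $N=z\sigma(w)-w\sigma(z)$ visibly solves $(\theta_z-1)(\theta_w-1)N=0$.

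For sufficiency I would reconstruct $\varphi$ from $m$. By the factorization above, $G$ is zero-free on $\D^{2}$ exactly when $m$ is injective on $\D$ as a map into $\C_\infty$ (the off-diagonal values give $m(z)\neq m(w)$, the diagonal gives $m'\neq0$, and $m(0)=\infty$). The remaining freedom $\sigma\mapsto\sigma+cz$ acts as $m\mapsto m+c$, and it suffices to choose $c$ so that $\sigma+cz=z(m+c)$ is zero-free, i.e. so that $m+c$ omits the value $0$. This is the crux: the injective $m$ maps $\D$ biholomorphically onto a proper subdomain $\Omega\subsetneq\C_\infty$ with $\infty=m(0)\in\Omega$, so $\Omega$ omits some \emph{finite} value $p$; taking $c:=-p$ makes $m+c$ zero-free on $\D$. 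Setting $\varphi:=z/(\sigma+cz)=1/(m+c)$ then gives a holomorphic $\varphi\colon\D\to\C$ with $\varphi(0)=0$ and $\varphi'(0)=1$, univalent because $m+c$ is injective. Since the shift leaves $N$ and hence $G$ unchanged, $G=\frac{zw(\varphi(z)-\varphi(w))}{(z-w)\varphi(z)\varphi(w)}=\e^{\Wsymb[\boldG_\varphi]}$, and as both $Q$ and $\Wsymb[\boldG_\varphi]$ vanish at the origin, taking logarithms yields $Q=\Wsymb[\boldG_\varphi]$. The one genuinely delicate step is this omitted-value selection of $c$; everything else is driven by the single observation that $Q\mapsto\e^Q$ converts the nonlinear wave equation into the Euler equation $(\theta_z-1)(\theta_w-1)\big[(z-w)\e^{Q}\big]=0$, whose solutions are explicitly parametrized by one holomorphic function $\sigma$.
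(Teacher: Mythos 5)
Your proof is correct, and while it shares the paper's underlying strategy---exponentiate to remove the nonlinearity, observe that a second-order linear operator annihilates $(z-w)\e^{Q}$, and reconstruct $\varphi$ after shifting by an omitted value---your treatment of the linear step takes a genuinely different and in one respect cleaner route. The paper passes to exterior coordinates $\xi=1/z$, $\eta=1/w$, where condition (b) becomes $\partial_\xi\partial_\eta\{(\xi-\eta)\,\e^{Q(\xi^{-1},\eta^{-1})}\}=0$, and invokes the d'Alembert form $G_1(\xi)+G_2(\eta)$ of the general solution; since this is only a local statement, the paper must then spend an argument excluding logarithmic branching of $G_1$ at infinity, via $G_1'(\xi)=\e^{Q(\xi^{-1},\xi^{-1})}=1+\Ordo(|\xi|^{-2})$. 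Your Euler-operator identity $(\theta_z-1)(\theta_w-1)N=0$ with $N=(z-w)\e^{Q}$ is in fact the very same operator statement read in disk coordinates (under $\xi=1/z$, $\eta=1/w$ one has $\partial_\xi\partial_\eta\{(\xi-\eta)\,\e^{Q(\xi^{-1},\eta^{-1})}\}=-(\theta_z-1)(\theta_w-1)N$ up to the factor $zw$), but solving it by Taylor coefficients on $\D^2$, where $(\theta_z-1)(\theta_w-1)$ acts diagonally as $(m-1)(n-1)$, makes the global single-valuedness of $\sigma$ automatic, so the branching issue never arises; this is what your approach buys. The endgame coincides in both proofs: zero-freeness of $\e^{Q}$ forces injectivity of $m=\sigma/z$ (in the paper, univalence of $\psi$ on $\D_{\mathrm{e}}$), the image is a proper subdomain of $\C_\infty$ containing $\infty$ and hence omits a finite value, and that omitted value fixes the additive constant $c$ so that $\varphi=1/(m+c)$ is a normalized univalent map; your identification of this omitted-value selection as the one delicate point agrees exactly with the paper, which disposes of it with the same Liouville-type remark. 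Your coefficient bookkeeping is also sound: the splitting of $N$ into terms with $m=1$ or $n=1$ plus the diagonal vanishing $N(z,z)\equiv0$ does force $N(z,w)=z\sigma(w)-w\sigma(z)$, with the $zw$-coefficient ambiguity being precisely the stated freedom $\sigma\mapsto\sigma+cz$.
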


\begin{rem}
This result ties in nicely with deformation theory. Let $\Lop$ denote the 
linear wave operator 
\[
\Lop Q(z):=\partial_z\partial_w Q
-\frac{z^2\partial_z Q-w^2\partial_w Q}{zw(z-w)}.
\]
Let $\lambda\in\D$, and suppose we look for an analytic family of solutions
$\lambda\mapsto Q_\lambda$ to the above nonlinear wave equation 
$\Lop Q+(\partial_z Q)(\partial_w Q)=0$. If $Q_0\equiv0$, we Taylor expand 
$Q_\lambda=\sum_{j=1}^\infty\lambda^j\hat Q_j$ and see that the the nonlinear wave 
equation becomes a sequence of linear PDEs for the coefficient functions 
$\hat Q_j$.
First, $\hat Q_1$ solves the homogeneous equation $\Lop \hat Q_1=0$, while 
for $j=2,3,4,\ldots$, $\hat Q_j$ solves an inhomogeneous equation
$\Lop\hat Q_j=F$, where $F$ is a nonlinear expression involving 
the lower order coefficient functions $\hat Q_k$ for $1\le k<j$. 
\end{rem}

\begin{rem}
It is a matter of substantial interest whether $\Sigma_{\mathrm{conf}}^2:=
\sup_f \sigma(f)^2>1$, where the supremum is taken over all $f$ of the form
$f=\oslash\Wsymb[\boldG_\varphi]$ for a normalized univalent function 
$\varphi:\D\to\C$. This question is related to the issue of whether 
$\boldG_\varphi$ is special among the contractions, which it of course is
in accordance with Theorem \ref{thm-NLW}. On the other hand, for general
contractions, we have Chase's construction of Theorem \ref{thm-1.72}
which gives a rather big asymptotic variance $\approx1.72$.  
\end{rem}

\subsection{Acknowledgements and a comment}
This paper is the result of a joint project with Serguei Shimorin, of which
a preliminary version was available earlier \cite{HedShim3}. The present
treatment of the subject matter has evolved rather substantially since 
the preliminary writeup. Tragically Serguei passed away in July 2016 as the 
result of a hiking accident. We thank several colleagues who helped 
organizing a conference in his honor at the Mittag-Leffler Institute in 
June, 2018. Among the organizers were Catherine B\'en\'eteau, 
Dmitry Khavinson, Mihai Putinar, and Alan Sola. We also want to thank Eero 
Saksman for a conversation on the fact that the mock-Bloch space is bigger 
than the Bloch space, Oleg Ivrii and Bassam Fayad for their interest in 
the asymptotic variance, and Zachary Chase for his contribution with the 
construction of a permutation matrix with somewhat extremal properties.

\section{The duality induced by the bilinear form of GAF}

\subsection{The GAF as a duality}
% M\"obius invariance of the process

Let us for the moment write $\Phi_{\balpha}(z)$ for the $\calD_0$-Gaussian
analytic function given by \eqref{eq-Gauss1}, having in mind the notation
$\balpha:=(\alpha_1,\alpha_2,\alpha_3,\ldots)$ for the Gaussian 
vector of elements from $\Gspace$. The closure in $\Gspace$ of the linear 
span of the vectors $\alpha_j$, $j=1,2,3,\ldots$, will be denoted by $\Xspace$.
We shall also need the closure in $\Gspace$ of the linear span of the vectors 
$\bar\alpha_j$, $j=1,2,3,\ldots$, and we denote it by $\Xspace_*$. 
The independence and symmetry of of these random variables means that 
the vectors $\alpha_j$ form an orthonormal system in $\Gspace$, and that
$\Xspace$ is orthogonal to $\Xspace_*$. 
 
Continuing along the same line of thinking, we would write 
$\Phi_{\boldsymbol\beta}(z)$ for $\Psi(z)$, the second copy of the same Gaussian 
process. Now, if $\Mop$ is a bounded linear operator on 
$\Xspace$, then $\Mop\alpha_j\in \Xspace$ and hence has a convergent 
expansion in basis 
vectors: 
\[
\Mop \alpha_j=\sum_{k=1}^{+\infty}M_{j,k}\alpha_k,
\]
where the sequence $k\mapsto M_{j,k}$ is in $l^2$. If we write $\Mop\balpha=
(\Mop\alpha_1,\Mop\alpha_2,\Mop\alpha_3,\ldots)$, we may speak of a Gaussian
analytic function process
\begin{equation}
\Phi_{\Mop\balpha}(z)=\sum_{j=1}^{+\infty}\Mop\alpha_j\, e_j(z)=\sum_{j=1}^{+\infty}
\sum_{k=1}^{+\infty}M_{j,k}\alpha_k\,e_j(z)=\sum_{k=1}^{+\infty}\alpha_k
\sum_{j=1}^{+\infty}M_{j,k} \,e_j(z)=\sum_{k=1}^{+\infty}\alpha_k\,
\Mop^\dagger e_k(z),
\label{eq-Gauss-M}
\end{equation}
where $e_j(z)=j^{-\frac12}z^j$ as before. Moreover, the \emph{GAF transpose of}
$\Mop$, given by
\begin{equation}
\Mop^\dagger e_k(z):=\sum_{j=1}^{+\infty}M_{j,k} e_j(z)
\label{eq-Gauss-MM}
\end{equation}
defines a bounded linear mapping on $\calD_0(\D)$, as it just corresponds to 
the transpose of the matrix for $\Mop$, and shifting the 
basis from that of the Gaussian space $\Xspace$ to that of $\calD_0(\D)$.  
This way we have a natural transpose mapping $\Mop\to\Mop^\dagger$, and
it is perhaps also natural to let its inverse be denoted the same way, so that
$(\Mop^{\dagger})^{\dagger}=\Mop$.

Typically, \eqref{eq-Gauss-M} will define a Gaussian analytic function with 
a correlation kernel which is different from that of $\Phi_{\balpha}(z)$. 
Indeed, while
$\expect\Phi_{\Mop\balpha}(z)\Phi_{\Mop\balpha}(w)=0$ automatically since 
$\Xspace$ is orthogonal to $\Xspace_*$, we see that
\begin{equation}
\expect\Phi_{\Mop\balpha}(z)\bar\Phi_{\Mop\balpha}(w)=\sum_{j,k=1}^{+\infty}
\langle\Mop\alpha_j,\Mop\alpha_k\rangle_\Omega\, e_j(z)\bar e_k(w),
\label{eq-Gauss-M2}
\end{equation}
which need not coincide with the corresponding correlation for $\Phi_{\balpha}$.
However, in the special case when the restriction $\Mop|_\Xspace=\Uop$ is 
unitary on $\Xspace$, so that $\Uop^*\Uop=\Iop$ on $\Xspace$, 
\eqref{eq-Gauss-M2} gives us  
\begin{equation}
\expect\Phi_{\Uop\balpha}(z)\bar\Phi_{\Uop\balpha}(w)=\sum_{j,k=1}^{+\infty}
\langle\Uop^*\Uop\alpha_j,\alpha_k\rangle_\Omega\, e_j(z)\bar e_k(w)
=\sum_{j=1}^{+\infty}e_j(z)\bar e_j(w)=\log\frac{1}{1-z\bar w},
\label{eq-Gauss-M3}
\end{equation}
that is, the same correlation structure as for $\Phi_{\balpha}(z)$. 
In other words, $\Phi_{\Uop\balpha}$ is another copy of the $\calD_0$-GAF. 
When $\Uop:\Xspace\to\Xspace$ is unitary, its GAF transpose $\Uop^\dagger$ 
acts unitarily on $\calD_0(\D)$, and the functions $\Uop^\dagger e_j(z)$ form 
an orthonormal basis for $\calD_0(\D)$. Naturally, this goes the other way 
around as well, that is, if a unitary transformation $\Vop$ on 
$\calD_0(\D)$ is given, this defines another unitary transformation 
$\Vop^\dagger$ on $\Xspace$ via \eqref{eq-Gauss-M} 
with $\Vop$ in place of $\Mop^\dagger$.
An important instance is when the unitary transformation on $\calD_0(\D)$ is
generated by a M\"obius automorphism $\phi$ of the disk $\D$.
If $\phi:\D\to\D$ is a M\"obius automorphism, then the operator 
$\Vop_\phi$ given by 
\[
\Vop_\phi f(z):=f\circ\phi(z)-f\circ\phi(0)
\]
is unitary on $\calD_0(\D)$ and therefore corresponds to a unitary 
transformation 
$\Vop_\phi^\dagger$ acting on $\Xspace$ such that
\begin{equation}
\Phi_{\Vop_\phi^\dagger\balpha}(z)=\sum_{j=1}^{+\infty}\Vop_\phi^\dagger\alpha_j\,e_j(z)=
\sum_{j=1}^{+\infty}\alpha_j\,\Vop_\phi e_j(z)=\sum_{j=1}^{+\infty}\alpha_j\,j^{-\frac12}
(\phi(z)^j-\phi(0)^j).
\label{eq-Vopduality}
\end{equation}

\subsection{GAF and Hankel-type duality}
We describe a variation on the above-mentioned GAF duality theme.
Suppose that instead $\Mop$ is a bounded linear operator 
$\Xspace\to\Xspace_*$ (like a Hankel operator). 
In the same fashion as before, we write 
\[
\Mop \alpha_j=\sum_{k=1}^{+\infty}M_{j,k}\bar\alpha_k,
\]
and obtain that
\begin{equation}
\Phi_{\Mop\balpha}(z)=\sum_{j=1}^{+\infty}\Mop\alpha_j\, e_j(z)=\sum_{j=1}^{+\infty}
\sum_{k=1}^{+\infty}M_{j,k}\bar\alpha_k\,e_j(z)=\sum_{k=1}^{+\infty}\bar\alpha_k
\sum_{j=1}^{+\infty}M_{j,k} \,e_j(z)=\sum_{k=1}^{+\infty}\bar\alpha_k\,
\Mop^\ddagger e_k(z),
\label{eq-Gauss-M'}
\end{equation}
with $\Mop^\ddagger$, \emph{the GAF-Hankel transpose of} $\Mop$, given by the 
analogue of \eqref{eq-Gauss-MM}, 
\begin{equation}
\Mop^\ddagger e_k(z):=\sum_{j=1}^{+\infty}M_{j,k}
%^\circledast 
e_j(z).
\label{eq-Gauss-M''}
\end{equation}
As with the GAF transpose, we let it be its own inverse, so that 
$(\Mop^\ddagger)^\ddagger=\Mop$. If $\Mop:\Xspace\to\Xspace_*$ is isometric
and onto, then $\Mop^\ddagger$ acts unitarily on $\calD_0(\D)$. %Indeed,
%we have that
%\begin{equation}
%\bigg\|\Mop\sum_j c_j\alpha_j\bigg\|^2_{L^2(\Omega)}=\bigg\|\sum_{j,k} 
%M_{j,k}c_j\bar\alpha_k\bigg\|^2_{L^2(\Omega)}=
%\end{equation}
On the other hand, if $\Vop$ is unitary on $\calD_0(\D)$, the $\calD_0$-GAF
\begin{equation}
\sum_{k=1}^{+\infty}\bar\alpha_k\,\Vop e_k(z)=\sum_{k=1}^{+\infty}\bar\alpha_k
\sum_{j=1}^{+\infty}V_{k,j} \,e_j(z)=\sum_{j=1}^{+\infty}
\sum_{k=1}^{+\infty}V_{k,j}\bar\alpha_k\,e_j(z)=
\sum_{j=1}^{+\infty}\Vop^\ddagger\alpha_j\,e_j(z),
\label{eq-Gauss-M'''}
\end{equation}
where
\begin{equation}
\Vop^\ddagger\alpha_j=\sum_{k=1}^{+\infty}V_{k,j}\bar\alpha_k.
\label{eq-Gauss-M'''}
\end{equation}

%This means that 
%\[
%\Phi_\phi(z)=\sum_{j=1}^{+\infty}\frac{\Uop_\phi\alpha_{j}}{\sqrt{j}}z^j,
%\qquad
%\Psi_\phi(z)=\sum_{j=1}^{+\infty}\frac{\Vop_\phi\beta_{j}}{\sqrt{j}}z^j, 
%\]
%where $U_\phi$ is a unitary transformation on $\Xspace$, while $\Vop_\phi$
%is a unitary transformation on $\Yspace$, the closed linear span of the
%Gaussian vectors
%$\beta_1,\beta_2,\beta_3,\ldots$.  In other words, the automorphism group 
%induces two unitary representations, on $\Xspace$ and on $\Yspace$, 
%respectively. 
%Now, what about the vector-valued process
%$(\Phi_\phi(z),\Psi_\phi(z))$, is it the same as the original process 
%$\Phi(z),\Psi(z))$ in the sense processes with identical stochastic properties?

\subsection{Representation of the correlations 
$\expect \Phi(z)\Psi(w)$ and $\expect \Phi(z)\bar\Psi(w)$}

In view of the definitions of $\Phi(z)$ and $\Psi(w)$, we have that
\begin{equation}
\Phi(z)\Psi(w)=\sum_{j,k=1}^{+\infty}\frac{\alpha_j\beta_k}{\sqrt{jk}}z^{j}w^{k},
%=\sum_{j,k=1}^{+\infty}(jk)^{-\frac12}
%\big(\xi_j\Pop_{\mathfrak{X}_*}\eta_k+\xi_j
%\Pop_{\mathfrak{X}_*}^\perp\eta_k\big)\
\label{eq-CORR1}
\end{equation}
so that taking expectations, we obtain that
\begin{equation}
\expect\Phi(z)\Psi(w)
=\sum_{j,k=1}^{+\infty}(jk)^{-\frac12}(\expect\alpha_j\beta_k)\,z^{j}w^{k}
%\sum_{j,k=1}^{+\infty}(jk)^{-\frac12}
%\big(\langle\xi_j,\Pop_{\mathfrak{X}}\bar\eta_k
%\rangle+\langle\xi_j,\Pop_{\mathfrak{X}}^\perp\bar\eta_k\big)\,z^{j+k}
%\\
=\sum_{j,k=1}^{+\infty}(jk)^{-\frac12}\langle\alpha_j,\bar\beta_k
\rangle_\Omega\,z^{j}w^{k},\qquad z,w\in\D.
\label{eq-CORR2}
\end{equation}
Next, let $\Sope:\mathfrak{G}\to\mathfrak{G}$ be the bounded linear 
operator which maps $\Xspace_*\to\Yspace_*$ according to 
$\Sope \bar\alpha_j=\bar\beta_j$ for $j=1,2,3,\ldots$, while $\Sope\gamma=0$ 
holds for all 
$\gamma\in\mathfrak{G}\ominus\mathfrak{X}_*=\Xspace\oplus\Nspace$. 
Then $\Sope$ is a partial isometry: it vanishes on $\Xspace\oplus\Nspace$, 
and acts isometrically on $\Xspace_*$.
In terms of this operator, we may rewrite \eqref{eq-CORR2}:
\begin{equation}
\expect\Phi(z)\Psi(w)=
\sum_{j,k=1}^{+\infty}(jk)^{-\frac12}\langle\alpha_j,\bar\beta_k
\rangle_\Omega\,z^{j}w^{k}=\sum_{j,k=1}^{+\infty}(jk)^{-\frac12}
\langle\alpha_j,\Sope\bar\alpha_k\rangle\,z^{j}w^{k},\qquad z\in\D.
\label{eq-CORR3}
\end{equation}
While the representation \eqref{eq-CORR3} has some good properties, it is not
too convenient to give useful estimates. We split 
\[
\bar\beta_j=\Sop\bar\alpha_j=\Pop_{\Xspace}\Sop\bar\alpha_j+
\Pop_{\Xspace}^\perp\Sop\bar\alpha_j\quad\Longleftrightarrow\quad
\beta_j=\bar\Sop\alpha_j=\Pop_{\Xspace_*}\bar\Sop\alpha_j+
\Pop_{\Xspace_*}^\perp\bar\Sop\alpha_j,
\]
so that the process $\Psi(w)$ takes the form
\begin{equation*}
\Psi(w)=\sum_{j=1}^{+\infty}\beta_j\,e_j(w)= 
\sum_{j=1}^{+\infty}\Pop_{\Xspace_*}\bar\Sop\alpha_j\,e_j(w)+
\sum_{j=1}^{+\infty}\Pop_{\Xspace_*}^\perp\bar\Sop\alpha_j\,e_j(w)
=:\Psi_1(w)+\Psi_2(w),
\end{equation*}
with the obvious splitting of the process in two. 
Since 
\[
\expect \Phi(z)\Psi_2(w)=\langle \Phi(z),\bar\Psi_2(w)\rangle_\Omega=0
\] 
as a consequence of the properties of the projections, we see that
\[
\expect\Phi(z)\Psi(w)=\expect\Phi(z)\Psi_1(w),
\]
and from the GAF-Hankel duality of \eqref{eq-Gauss-M'}, 
\[
\Psi_1(w)=\sum_{j=1}^{+\infty}(\Pop_{\Xspace_*}\bar\Sop\alpha_j)\,e_j(w)=
\sum_{j=1}^{+\infty}\bar\alpha_j\,(\Pop_{\Xspace_*}\bar\Sop)^\ddagger e_j(w).
\]
It is now immediate that
\begin{equation}
\expect\Phi(z)\Psi(w)=\expect\Phi(z)\Psi_1(w)
=\sum_{j=1}^{+\infty}e_j(z)\,(\Pop_{\Xspace_*}\bar\Sop)^\ddagger e_j(w),\qquad z\in\D.
\label{eq-Gauss-Hankel1}
\end{equation}
Turning our attention to the other correlation $\expect\Phi(z)\bar\Psi(w)$, 
we split
\[
\bar\beta_j=\Sop\bar\alpha_j=\Pop_{\Xspace_*}\Sop\bar\alpha_j+
\Pop_{\Xspace_*}^\perp\Sop\bar\alpha_j\quad\Longleftrightarrow\quad
\beta_j=\bar\Sop\alpha_j=\Pop_{\Xspace}\bar\Sop\alpha_j+
\Pop_{\Xspace}^\perp\bar\Sop\alpha_j,
\]
so that the process $\Psi(w)$ takes the form
\begin{equation*}
\Psi(w)=\sum_{j=1}^{+\infty}\beta_j\,e_j(w)= 
\sum_{j=1}^{+\infty}\Pop_{\Xspace}\bar\Sop\alpha_j\,e_j(w)+
\sum_{j=1}^{+\infty}\Pop_{\Xspace}^\perp\bar\Sop\alpha_j\,e_j(w)
=:\Psi_3(w)+\Psi_4(w),
\end{equation*}
with the obvious splitting of the process in two. Since
\[
\expect \Phi(z)\bar\Psi_4(w)=\langle \Phi(z),\Psi_4(w)\rangle_\Omega=0
\]
as a consequence of the properties of the projections, we find that
\[
\expect\Phi(z)\bar\Psi(w)=\expect\Phi(z)\bar\Psi_3(w).
\]
In addition, by the duality of \eqref{eq-Gauss-MM}, 
\[
\Psi_3(w)=\sum_{j=1}^{+\infty}(\Pop_{\Xspace}\bar\Sop\alpha_j)\,e_j(w)=
\sum_{j=1}^{+\infty}\alpha_j\,(\Pop_{\Xspace}\bar\Sop)^\dagger e_j(w).
\]
which gives the equality
\begin{equation}
\expect\Phi(z)\bar\Psi(w)
=\sum_{j=1}^{+\infty}e_j(z)\,\overline{(\Pop_{\Xspace}\bar\Sop)^\dagger e_j(w)},
\qquad z,w\in\D.
\label{eq-Gauss-Hankel2}
\end{equation}
To simplify the notation, we write $\Qop=(\Pop_{\Xspace_*}\bar\Sop)^\ddagger$
and $\Rop=(\Pop_{\Xspace}\bar\Sop)^\dagger$ which are both contractions on
$\calD_0(\D)$. Then our main formulas become, for $z,w\in\D$:
\begin{equation}
\expect\Phi(z)\Psi(w)=\sum_{j=1}^{+\infty}e_j(z)\,\Qop e_j(w),\qquad 
\expect\Phi(z)\bar\Psi(w)=\sum_{j=1}^{+\infty}e_j(z)\,\overline{\Rop e_j(w)}.
\label{eq-QR1}
\end{equation}

\section{Proofs of the fundamental bounds}

\subsection{The joint pointwise bound of correlations}

\begin{proof}[Proof of Proposition \ref{prop-triang1}]
Essentially, we just need to use the property that the $8\times8$ matrix
\eqref{eq-4X4.1} is positive semidefinite. Since for complex constants 
$a,b,c,d$,
\begin{multline*}
0\le\big|a\Phi(z)+b\bar\Phi(z)-c\Psi(w)-d\bar\Psi(w)\big|^2=
(|a|^2+|b|^2)|\Phi(z)|^2+(|c|^2+|d|^2)|\Psi(w)|^2
+2\re(a\bar b(\Phi(z))^2)
\\
-2\re(a\bar c\Phi(z)\bar\Psi(w))-2\re(a\bar d\Phi(z)\Psi(w))-
2\re(\bar b c\Phi(z)\Psi(w))-2\re(\bar b d\Phi(z)\bar\Psi(w))
+2\re(c\bar d (\Psi(w))^2),
\end{multline*}
the inequality survives after taking the expectation:
\begin{multline*}
0\le\expect\big|a\Phi(z)+b\bar\Phi(z)-c\Psi(w)-d\bar\Psi(w)\big|^2=
(|a|^2+|b|^2)\log\frac{1}{1-|z|^2}+(|c|^2+|d|^2)\log\frac{1}{1-|w|^2}
\\
-2\re((a\bar c+\bar bd)\expect\Phi(z)\bar\Psi(w))
-2\re((a\bar d+\bar b c)\expect\Phi(z)\Psi(w)).
\end{multline*}
In other words, we have the inequality
\begin{multline*}
2\re((a\bar d+\bar b c)\expect\Phi(z)\Psi(w))
+2\re((a\bar c+\bar bd)\expect\Phi(z)\bar\Psi(w))
\le
(|a|^2+|b|^2)\log\frac{1}{1-|z|^2}+(|c|^2+|d|^2)\log\frac{1}{1-|w|^2}.
\end{multline*}
%We equip $\C^2$ with the usual sesquilinear inner product 
%$\langle((a,b),(c,d)\rangle=a\bar c+b\bar d$, in terms of which
%the above inequality asserts that
%\begin{multline*}
%2\re(\langle(a,\bar b),(c,\bar d)\rangle\expect\Phi(z)\bar\Psi(w))+
%2\re(\langle(a,\bar b),(d,\bar c)\rangle\expect\Phi(z)\Psi(w))\le 
%|(a,b)|^2\log\frac{1}{1-|z|^2}+|(c,d)|^2\log\frac{1}{1-|w|^2}.
%\end{multline*}
We now restrict the values of our parameters, and assume that 
%$|a|=|b|$ and $|c|=|d|$. 
$b=\bar a$ and $d=\bar c$.
%We write $\bar b=a\omega_1$ and $\bar d=c\omega_2$, where 
%$|\omega_1|=|\omega_2|=1$. 
The above inequality then gives that
%\begin{equation*}
%\re(a\bar c\bar\omega_2(\omega_1+\omega_2)\expect\Phi(z)\bar\Psi(w))+
%\re(ac(\omega_1+\omega_2)\expect\Phi(z)\Psi(w))\le
%|a|^2\log\frac{1}{1-|z|^2}+|c|^2\log\frac{1}{1-|w|^2}.
%\end{equation*}
%We consider $\omega_1=\omega_2=1$, which gives us
\begin{equation*}
2\re(ac\expect\Phi(z)\Psi(w))
+2\re(a\bar c\expect\Phi(z)\bar\Psi(w))
\le
|a|^2\log\frac{1}{1-|z|^2}+|c|^2\log\frac{1}{1-|w|^2}.
\end{equation*}
We write $ac=|ac|\omega_1$ and $a\bar c=|ac|\omega_2$, where 
$|\omega_1|=|\omega_2|=1$. Then 
\begin{equation*}
2\re(\omega_1\expect\Phi(z)\Psi(w))
+2\re(\omega_2\expect\Phi(z)\bar\Psi(w))\le
\frac{|a|}{|c|}\log\frac{1}{1-|z|^2}+\frac{|c|}{|a|}\log\frac{1}{1-|w|^2}.
\end{equation*}
On the right-hand side, we are free to minimize over $|a|$ and $|c|$, while
on the left-hand side, we are free to maximize over the (freely choosable)
unit vectors $\omega_1$ and $\omega_2$. After optimization, we arrive at
the asserted estimate.
\end{proof}

\subsection{The proof of the fundamental integral estimate}
\label{subsec-thm-fund1}
\begin{proof}[Proof of Theorem \ref{thm-fund1}]
The first observation is that by $L^2(\D)$-orthogonality, 
\[
\int_\D\big|a w\expect\Phi(z)\Psi'(w)+b\bar w
\expect\Phi(z)\bar\Psi'(w)\big|^2
\frac{\diff A(w)}{|w|^2}
=|a|^2\int_\D\big|\expect\Phi(z)\Psi'(w)\big|^2\diff A(w)+
|b|^2\int_\D\big|\expect\Phi(z)\bar \Psi'(w)\big|^2\diff A(w).
\]
Next, we observe that by the representation \eqref{eq-QR1} and the
norm contractive property of $\Qop$,
\[
\int_\D\big|\expect\Phi(z)\Psi'(w)\big|^2\diff A(w)=
\Big\|\sum_{j=1}^{+\infty}e_j(z)\Qop e_j\Big\|_\nabla^2\le
\Big\|\sum_{j=1}^{+\infty}e_j(z) e_j\Big\|_\nabla^2=\sum_{j=1}^{+\infty}|e_j(z)|^2
=\log\frac{1}{1-|z|^2}, 
\]
and, that analogously, by the norm contractive property of $\Rop$,
\[
\int_\D\big|\expect\Phi(z)\bar\Psi'(w)\big|^2\diff A(w)=
%\Big\|\sum_{j=1}^{+\infty}e_j(z)\overline{\Rop e_j}\Big\|_\nabla^2
\Big\|\sum_{j=1}^{+\infty}\bar e_j(z)\Rop e_j\Big\|_\nabla^2
\le\Big\|\sum_{j=1}^{+\infty}\bar e_j(z) e_j\Big\|_\nabla^2=
\sum_{j=1}^{+\infty}|e_j(z)|^2=\log\frac{1}{1-|z|^2}. 
\]
The proof is complete.
\end{proof}

\section{Dirichlet symbols of contractions on 
$L^2(\D)$ and analytic correlations of GAFs}

\subsection{The correspondence between Dirichlet symbols
and the analytic correlation}
We show the indicated relationship between the analytic correlation $\expect
\Phi(z)\Psi(w)$ and the Dirichlet symbols $\Zsymb[\Tope](z,w)$ for contractions
$\Tope$ on $L^2(\D)$. 

\begin{proof}[Proof of Theorem \ref{thm-transfer1}]
We begin with part (a), so we are given the orthonormal systems 
$\{\alpha_j\}_j$ and $\{\beta_j\}_j$ in the Gaussian Hilbert space $\Gspace$, 
and need to construct the norm contractive operator $\Tope$ on $L^2(\D)$ 
with the indicated property.
We let $\Sope:\Gspace\to\Gspace$ be the bounded linear operator with 
$\Sope\bar\alpha_j=\bar\beta_j$ for $j=1,2,3,\ldots$ while 
$\Sope\gamma=0$ for all $\gamma\in\Gspace\ominus\Xspace_*$.
Given that $\Sope$ is a contraction, the product $\Pop_{\Xspace}\Sope$ is 
a contraction as well, and we may decompose
\[
\Pop_\Xspace\bar\beta_k=\Pop_{\Xspace}\Sope\bar\alpha_k=
\sum_{j=1}^{+\infty}A_{k,j}\alpha_j, 
\]  
where $\sum_j|A_{k,j}|^2\le1$. 
For $j=1,2,3,\ldots$, we write $f_j(z)=e_j'(z)=j^{\frac12}z^{j-1}$, which 
constitutes an orthonormal basis in $A^2(\D)$, and
put
\[
\Tope^* f_k=\sum_{l=1}^{+\infty}A_{k,l}\bar f_l,\qquad k=1,2,3,\ldots.
\] 
By linearity and norm boundedness of the matrix $(A_{j,k})_{j,k}$, this 
defines $\Tope^*$ on $A^2(\D)$. Then 
\[
\langle \bar f_j,\Tope^*f_k\rangle_\D=\sum_{l=1}^{+\infty}A_{k,l}
\langle \bar f_j,\bar f_l\rangle_\D=A_{k,j}=\sum_{l=1}^{+\infty}A_{k,l}
\langle\alpha_j,\alpha_l\rangle_\Omega=\langle\alpha_j,
\Pop_{\Xspace}\Sop\bar\alpha_k\rangle_\Omega
=\langle\alpha_j,\Sop\bar\alpha_k\rangle_\Omega=
\langle\alpha_j,\bar\beta_k\rangle_\Omega,  
\]
and since
\begin{equation}
\bar z\szeg_z(\zeta)=\frac{\bar z}{1-\bar z\zeta}=\sum_{j=1}^{+\infty}\bar z^j
\zeta^{j-1}=\sum_{j=1}^{+\infty}\bar e_j(z)f_j(\zeta),
\label{eq-kerneldecomp1}
\end{equation}
it now follows that
\[
zw\,\langle \bar\szeg_z,\Tope^*\bar\szeg_w\rangle_\D=\sum_{j,k=1}^{+\infty}
e_j(z)e_k(w)\langle \bar f_j,\Tope^*f_k\rangle_\D=\sum_{j,k=1}^{+\infty}
\langle\alpha_j,\bar\beta_k\rangle_\Omega e_j(z)e_k(w)=\expect\Phi(z)\Psi(w),  
\]
so that condition (i) holds if $\Tope$ is the adjoint of $\Tope^*$.
But to properly define $\Tope$, we need to extend $\Tope^*$ to all of 
$L^2(\D)$. To this end, we simply declare 
that $\Tope^*f=0$ holds for $f\in L^2(\D)\ominus A^2(\D)$.
It remains to check that so constructed, $\Tope^*$ is a contraction on
$L^2(\D)$, for then the adjoint $\Tope$ is contractive as well. 
For a polynomial $f\in A^2(\D)$, we decompose it as a finite sum 
$f=\sum_kb_k f_k$ where $\|f\|^2_{L^2(\D)}=\sum_k|b_k|^2$, and since 
$\Tope^* f=\sum_{l,k}A_{k,l}b_k \bar f_l$, we find that
\[
\|\Tope^*f\|_{L^2(\D)}^2=\sum_{l}\bigg|\sum_{k}A_{k,l}b_k\bigg|^2
=\bigg\|\Pop_{\Xspace}\Sope\sum_k b_k\bar\alpha_k\bigg\|^2\le
\bigg\|\sum_k b_k\bar\alpha_k\bigg\|^2=\sum_k|b_k|^2=\|f\|^2_{L^2(\D)},
\] 
and it follows that $\Tope^*$ defines a contraction on $A^2(\D)$ and hence 
in a second step on all of $L^2(\D)$. This concludes the demonstration of 
part (a).

We proceed with the remaining task of obtaining part (b), which amounts
to constructing the Gaussian Hilbert space 
$\Gspace$ and the sequence $\beta_j$ and associated partial isometry 
$\Sop$ for a given contraction $\Tope$ on $L^2(\D)$. We recall that $\Xspace$
and $\Xspace_*$ are two orthogonal subspaces in $\Gspace$. However, the
sum $\Xspace\oplus\Xspace_*$ need not be all of $\Gspace$. We will assume 
that $\Nspace:=\Gspace\ominus(\Xspace\oplus\Xspace_*)$ is \emph{separable and 
infinite-dimensional} 
which just amounts to considering a sufficiently big (separable) 
Gaussian Hilbert space $\Gspace$. We split $\Nspace=\Mspace\oplus\Mspace_*$,
where $\Mspace$ is the closed linear span of certain elements 
$\nu_1,\nu_2,\nu_3,\ldots$ of $\Nspace$, which are all i i d standard complex 
Gaussian variables (see Subsection \ref{subsec-CGHS}). The space $\Mspace_*$
is then the closed linear span of the complex conjugates $\bar\nu_1,\bar\nu_2,
\bar\nu_3,\ldots$.
%Then $\Nspace$ has an orthonormal basis $\nu_j$, $j=1,2,3,\ldots$. 
As for notation, we will need the orthogonal (Bergman) projection 
$\Pop_{A^2}:L^2(\D)\to A^2(\D)$, and its conjugate $\bar\Pop_{A^2}$ defined by
\[
\bar\Pop_{A^2}(f)=\overline{\Pop_{A^2}(\bar f)}.
\]
We begin with the observation that
\[
\langle\Tope\bar f_j, f_k\rangle_\D=\langle\bar f_j, \Tope^* f_k\rangle_\D
=\langle\bar f_j, \bar\Pop_{A^2}\Tope^* f_k\rangle_\D.
\qquad j,k=1,2,3,\ldots, 
\]
%which tells us that $\Tope^*$ may be replaced by $\bar\Pop_{A^2}\Tope^*$, which
%clearly is a contraction given that $\Tope$ is. After all, 
We need to find i i d standard Gaussian vectors 
$\beta_1,\beta_2,\beta_3,\ldots$  in the Gaussian Hilbert space $\Gspace$ 
such that
\[
\expect\alpha_j\beta_k=
\langle\alpha_j,\bar\beta_k\rangle_\Omega=\langle\Tope\bar f_j, f_k\rangle_\D=
\langle\bar f_j, \bar\Pop_{A^2}\Tope^* f_k\rangle_\D,
\qquad j,k=1,2,3,\ldots,
\]
since by summing over $j,k$ we arrive at
\begin{multline*}
\expect\Phi(z)\Psi(z)=\sum_{j,k=1}^{+\infty}e_j(z)e_k(w)\,
\expect\alpha_j\beta_k=
\sum_{j,k=1}^{+\infty}e_j(z)e_k(w)\,\langle\Tope\bar f_j, f_k\rangle_\D
\\
=\sum_{j,k=1}^{+\infty}e_j(z)e_k(w)\,
\langle\bar f_j, \bar\Pop_{A^2}\Tope^* f_k\rangle_\D=
\langle\bar \szeg_z, \bar\Pop_{A^2}\Tope^* \szeg_w\rangle_\D
=\langle\bar\Pop_{A^2}\bar \szeg_z, \Tope^* \szeg_w\rangle_\D=
\langle\bar \szeg_z, \Tope^* \szeg_w\rangle_\D=\langle\Tope\bar \szeg_z, 
\szeg_w\rangle_\D,
\end{multline*}
where we used \eqref{eq-kerneldecomp1}.

The element $\bar\Pop_{A^2}\Tope^* f_k$ is in the space of complex conjugates
of $A^2(\D)$, and as such it has an expansion
\[
\bar\Pop_{A^2}\Tope^* f_k=\sum_{l=1}^{+\infty}A_{k,l}\bar f_l, 
\]
where $\sum_j|A_{k,j}|^2\le1$. We need $\Sope$ to have the property that
in terms of the above expansion,
\[
\Pop_{\Xspace}\Sope\bar\alpha_k=\Aop\bar\alpha_k:=
\sum_{j=1}^{+\infty}A_{k,j}\alpha_j, 
\]
%and if we declare that $\Aop\gamma=0$ should hold for all
%$\gamma\in \Gspace\ominus\Xspace_*=\Xspace\oplus\Nspace$, this defines 
which defines $\Aop$ as an operator $\Xspace_*\to\Xspace$. As such, 
it is a contraction.
Indeed, if $\gamma\in\Xspace_*$ has expansion $\gamma=\sum_k b_k \bar\alpha_k$, 
we obtain that
\[
\|\Aop\gamma\|_{\Omega}^2=
\sum_{j}\bigg|\sum_{k} A_{k,j}b_k\bigg|^2
=\bigg\|\bar\Pop_{A^2}\Tope\sum_k b_k\bar e_k\bigg\|^2\le
\bigg\|\sum_k b_k\bar e_k\bigg\|^2=\sum_k|b_k|^2=\|\gamma\|^2,
\]
which verifies the norm contractivity of $\Aop$.
% is a norm contraction $\Xspace_*\to\Xspace$. As
% such, 
%which makes $\Aop$ a contraction on Gaussian vectors in 
%$\Xspace_*$, and by extension a contraction on $\Gspace$. 
We proceed to define the operator $\Sope$ and hence the Gassian vectors 
$\bar\beta_j=\Sop\bar\alpha_j$. 
To do this, we appeal to a standard procedure in operator theory. 
%Let 
%$\Xspace_\oplus:=\Xspace\oplus\Xspace_*=\Gspace\ominus\Nspace$, and 
%observe that 
Since $\Aop$ maps $\Xspace_*\to\Xspace$, it has an adjoint
$\Aop^\circledast$ which maps $\Xspace\to\Xspace_*$. 
We now form the \emph{defect operator} 
\[
\Dop:=(\Iop_{\Xspace_*}-\Aop^\circledast\Aop)^{1/2},
\]
which maps $\Xspace_*\to\Xspace_*$.
The square root is well-defined given that we are taking the square root
of a positive (semidefinite) operator. We use this defect operator to define
an associated operator $\tilde\Dop$ on $\Mspace$, by declaring that 
if $\Dop\bar\alpha_j=\sum_k D_{j,k}\bar\alpha_k$, then
\[
\tilde\Dop\nu_j=\sum_{k}D_{j,k}\nu_k,\qquad j=1,2,3,\ldots.
\]  
Then $\tilde\Dop$ becomes a contraction on $\Mspace$, and we may now define
the operator $\Sope$. 
For $\gamma\in\Gspace\ominus\Xspace_*$, we declare 
$\Sope\gamma=0$.
%, which is consistent with the corresponding property of
%$\Pop_{\Xspace}\Sope$. 
For $\gamma\in\Xspace_*$, we expand in basis vectors
$\gamma=\sum_k b_k\bar\alpha_k$, and define the Gaussian vectors
\begin{equation}
\bar\beta_k=\Sope\bar\alpha_k:=\Aop\bar\alpha_k+\tilde\Dop\nu_k
\in\Xspace\oplus\Mspace,
\qquad k=1,2,3,\ldots,
\label{eq-defbeta1}
\end{equation}
where $\Pop_\Xspace\Sope$ is as before. 
Since $\tilde\Dop\nu_k\in\Mspace\subset\Nspace$, we see that
\[
\Pop_{\Xspace}\Sope\bar\alpha_k=\Pop_{\Xspace}\Aop\bar\alpha_k
+\Pop_{\Xspace}\tilde\Dop\nu_k=\Aop\bar\alpha_k,
\]
since $\Aop\bar\alpha_k\in\Xspace$ and we know that $\Nspace$ is orthogonal 
to $\Xspace$, so things are as they should be. Moreover, 
$\Sope$ acts isometrically on 
$\Xspace_*$, as we see from
\[
\|\Sope\gamma\|_{L^2(\D)}^2=
\|\Aop\gamma\|_{L^2(\D)}^2+\|\Dop\gamma\|^2=
\|\gamma\|^2.
\]
It follows that the functions $\bar\beta_k:=\Sop\bar\alpha_k$ form an
orthonormal system in $\Gspace$. It remains to verify that they are i i d
standard complex Gaussians, which requires in addition to orthonormality
that $\expect\bar\beta_j\bar\beta_k=0$ holds for all $j$ and $k$. In view of
\eqref{eq-defbeta1},
\[
\expect\bar\beta_j\bar\beta_k=\langle\bar\beta_j,\beta_k\rangle_\Omega=0,
\]
given that $\bar\beta_j\in \Xspace\oplus \Mspace$ while $\beta_k\in
\Xspace_*\oplus\Mspace_*$ and the subspaces $\Xspace\oplus\Mspace$ and
$\Xspace_*\oplus\Mspace_*$ are orthogonal to one another in $\Gspace$.
This tells us how to construct the sequence $\beta_1,\beta_2,\beta_3,\ldots$
stanrting from the contraction $\Tope$ on $L^2(\D)$, and concludes the
proof of part (b).
\end{proof}

\subsection{Orthonormal systems 
in Hilbert space and operator symbols}
We recall the setting of Corollary \ref{cor-Hilb1}, where $x_1,x_2,x_3,\ldots$
and $y_1,y_2,y_3,\ldots$ are two orthonormal systems in complex Hilbert space 
$\calH$. Let $\calX$ denote the closed linear span of the vectors 
$x_1,x_2,x_3,\ldots$, and $\Pop_\calX$ the orthogonal projection 
$\calH\to\calX$.   

\begin{proof}[Proof of Corollary \ref{cor-Hilb1}]
We recall the notation $f_j(z)=e_j'(z)=j^{1/2}z^{j-1}$, and let 
$\Tope^*$ be a linear operator with the property that 
\begin{equation}
\Tope^* f_j=\sum_{k}\langle y_j,x_k\rangle_\calH \bar f_k.
\label{eq-Tope*1}
\end{equation}
Then we have for scalars $c_j$ (only finitely many nonzero) that
\[
\bigg\|\Tope^*\sum_j c_jf_j\bigg\|^2_\D=
\bigg\|\sum_{j,k} c_j\langle y_j,x_k\rangle_\calH \bar f_k
\bigg\|^2_\D
=\bigg\|\sum_{j,k} c_j\langle y_j,x_k\rangle_\calH x_k
\bigg\|^2_\calH=\bigg\|\Pop_{\calX}\sum_{j} c_j y_j\bigg\|_\calH^2\le
\bigg\|\sum_{j} c_j f_j\bigg\|_\calH^2
\] 
which shows that $\Tope^*$ defines a norm contraction 
$A^2(\D)\to\mathrm{conj}\,A^2(\D)$. In a second step, we extend $\Tope^*$ to all
of $A^2(\D)$ by declaring that $\Tope^* f=0$ for all 
$f\in L^2(\D)\ominus A^2(\D)$, and we see that this defines a contraction on
$L^2(\D)$.
The Dirichlet symbol of $\Tope$ is then, in view of \eqref{eq-kerneldecomp1},
\[
zw\,\Zsymb[\Tope](z,w)=zw\langle \Tope\bar\szeg_z,\szeg_w\rangle_\D=
zw\langle \bar\szeg_z,\Tope^*\szeg_w\rangle_\D=
\sum_{j,k=1}^{+\infty}e_j(z)e_k(w)\langle \bar f_j,\Tope^*f_k\rangle_\D
=\sum_{j,k=1}^{+\infty}e_j(z)e_k(w)\langle x_j,y_k\rangle_\D.
\]
Taking the diagonal restriction, we have that
\[
z^2\Zsymb[\Tope](z,z)=\sum_{l=2}^{+\infty}z^l\sum_{j,k:j+k=l}(jk)^{-\frac12}
\langle x_j,y_k\rangle_\D,
\]
and it follows that the claim is a direct consequence of 
Theorem \ref{thm-fund2}.  
\end{proof}

\section{Hilbert spaces and diagonal restriction on 
the bidisk}
\label{sec-hilbspaces}

\subsection{Weighted Bergman spaces on the disk and bidisk}
For real $\alpha>-1$, we write $A^2_\alpha(\D)$ for the Hilbert space of 
holomorphic functions $f:\D\to\C$ subject to the norm boundedness condition
\[
\|f\|_{A^2_\alpha(\D)}^2=(\alpha+1)\int_\D|f(z)|^2(1-|z|^2)^\alpha\diff A(z)<+\infty.
\]
Moreover, we write $A^2_{-1,0}(\D^2)$ for the 
Hilbert space of holomorphic functions $f:\D\to\C$ subject to the norm 
boundedness condition
\[
\|f\|_{A^2_{-1,0}(\D)}^2=\int_\D\int_\Te |f(z,w)|^2\diff s(z)
\diff A(w)<+\infty.
\]
For analytic functions $f$ on the bidisk, we let $\oslash$ denote the operation
of taking the diagonal restriction, $\oslash f(z):=f(z,z)$. We may for
instance write $\partial_{z}^j\oslash(\partial_w^k f)$ to denote the function
\[
\partial_{z}^j\Big(\partial_w^k f(z,w)\big|_{w:=z}\Big).
\]
In \cite{HedShim1}, the following diagonal norm expansion theorem was 
obtained.

\begin{thm}
For $f\in A^2_{-1,0}(\D^2)$, we have that
\begin{equation*}
\|f\|_{A^2_{-1,0}(\D)}^2=\sum_{n=0}^{+\infty}
\frac{(n+2)_n}{(n+1)!}\bigg\|\sum_{k=0}^{n}
\frac{(-1)^k(k+2)_{n-k}}{k!(n-k)!(n+k+2)_{n-k}}
\partial_{z}^{n-k}\oslash(\partial_w^k f)\bigg\|^2_{A^2_{2n+1}(\D)}.
\end{equation*}
\label{thm-hedshim-DMJ}
\end{thm}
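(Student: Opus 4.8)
The plan is to exploit that both sides are squared norms, hence quadratic forms in $f$, and to test the identity against the orthogonal monomial basis $z^pw^q$ of $A^2_{-1,0}(\D^2)$. The left-hand side is transparent: from the defining integral, $\int_\Te|z|^{2p}\diff s(z)=1$ and $\|w^q\|^2_{A^2_0(\D)}=\int_\D|w|^{2q}\diff A(w)=\frac1{q+1}$, so $\|f\|^2_{A^2_{-1,0}(\D)}=\sum_{p,q}|a_{p,q}|^2\frac1{q+1}$ for $f=\sum_{p,q}a_{p,q}z^pw^q$. The crucial structural observation for the right-hand side is that diagonal restriction collapses bidegree to total degree: $\partial_z^{n-k}\oslash(\partial_w^k(z^pw^q))$ is a scalar multiple of $z^{p+q-n}$. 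Consequently the inner combination $g_n[f]:=\sum_{k=0}^n\frac{(-1)^k(k+2)_{n-k}}{k!(n-k)!(n+k+2)_{n-k}}\partial_z^{n-k}\oslash(\partial_w^kf)$ sends $z^pw^q$ to $c_n(p,q)\,z^{p+q-n}$ for an explicit coefficient $c_n(p,q)$, and in the $A^2_{2n+1}$ inner product only monomials with the same total degree $m:=p+q=p'+q'$ survive the orthogonality of distinct powers. This is what makes a diagonal identity possible at all; conceptually it reflects that the diagonal jets $\oslash(\partial_w^kf)$ already determine $f$ through its Taylor expansion off the diagonal.

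First I would record the elementary evaluation $\|z^s\|^2_{A^2_\alpha(\D)}=\frac{s!}{(\alpha+2)_s}$ and perform the differentiations. Using $(k+2)_{n-k}=\frac{(n+1)!}{(k+1)!}$ and $(n+k+2)_{n-k}=\frac{(2n+1)!}{(n+k+1)!}$ to turn all Pochhammer symbols into factorials, one reaches the closed form
\[
c_n(m-q,q)=\frac{(n+1)!}{(2n+1)!\,(m-n)!}\sum_{k\ge0}\frac{(-1)^k(n+k+1)!}{k!\,(k+1)!\,(n-k)!}\cdot\frac{q!\,(m-k)!}{(q-k)!}.
\]
Inserting the weight $\frac{(n+2)_n}{(n+1)!}=\frac{(2n+1)!}{((n+1)!)^2}$ together with $\|z^{m-n}\|^2_{A^2_{2n+1}}=\frac{(m-n)!}{(2n+3)_{m-n}}$, the theorem reduces, after polarization and fixing the total degree $m$, to the finite scalar identity
\[
\sum_{n=0}^{m}\frac{(2n+1)!}{\big((n+1)!\big)^2}\,\frac{(m-n)!}{(2n+3)_{m-n}}\,c_n(m-q,q)\,c_n(m-q',q')=\frac{\delta_{q,q'}}{q+1},\qquad 0\le q,q'\le m.
\]

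The heart of the matter, and the step I expect to be the main obstacle, is this orthogonality-and-completeness relation. I would prove it by recognizing the coefficients $q\mapsto c_n(m-q,q)$, for fixed $m$, as a classical family of discrete orthogonal polynomials in the variable $q\in\{0,1,\dots,m\}$: the shape of $c_n$ as a terminating hypergeometric sum carrying the falling factorial $q!/(q-k)!$ and the quadratic Pochhammer ratios points to Hahn (or dual Hahn) polynomials. The displayed identity is then the conjunction of their orthogonality relation, giving the vanishing for $q\ne q'$, and the dual (completeness) relation, whose normalization produces exactly the Bergman weight $\frac1{q+1}=\|w^q\|^2_{A^2_0}$. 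Concretely this amounts to a Pfaff--Saalsch\"utz evaluation of the inner $k$-sum followed by the known Hahn orthogonality; one must verify that the index $n$ genuinely ranges over $0,\dots,m$, so that the system is square and the relation is a two-sided inverse rather than one-sided orthogonality. Once the scalar identity holds, summing back over the monomial expansion of a general $f\in A^2_{-1,0}(\D^2)$ and justifying the interchange of the absolutely convergent series by density of polynomials completes the proof.
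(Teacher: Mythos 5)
You cannot be compared against ``the paper's own proof'' here, because this paper does not prove Theorem \ref{thm-hedshim-DMJ} at all: it is imported verbatim from \cite{HedShim1} (``In \cite{HedShim1}, the following diagonal norm expansion theorem was obtained''). So your proposal has to be judged on its own merits, and on those merits it is sound, although the decisive computation is sketched rather than executed. Your reduction is correct and checkable: with $f=\sum a_{p,q}z^pw^q$ the left side is $\sum_{p,q}|a_{p,q}|^2/(q+1)$; the operator $g_n$ sends $z^pw^q$ to $c_n(p,q)z^{p+q-n}$; your factorial conversions $(k+2)_{n-k}=(n+1)!/(k+1)!$, $(n+k+2)_{n-k}=(2n+1)!/(n+k+1)!$, $(n+2)_n/(n+1)!=(2n+1)!/((n+1)!)^2$, and $\|z^s\|^2_{A^2_\alpha}=s!/(\alpha+2)_s$ are all right, and the resulting finite scalar identity is exactly the right target (for $m=1$ it reads: weights $\tfrac13,\tfrac32$ against vectors $(1,1)$ and $(\tfrac23,-\tfrac13)$ reproduce $\mathrm{diag}(1,\tfrac12)$, which indeed holds). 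Your structural guess is also correct, not merely plausible: the inner $k$-sum equals $(n+1)\,m!\;{}_3F_2(-n,\,n+2,\,-q;\,2,\,-m;\,1)$, which is the Hahn polynomial $Q_n(q;\alpha,\beta,N)$ with $\alpha=1$, $\beta=0$, $N=m$, and the Hahn weight $\binom{1+q}{q}\binom{m-q}{m-q}=q+1$ is precisely the Bergman weight you need. Your remark about the system being square is the right thing to worry about and resolves affirmatively: $c_n\equiv0$ for $n>m$, and $q\mapsto c_n(m-q,q)$ has degree exactly $n$ (only $k=n$ contributes the top power), so the $m+1$ vectors are linearly independent and orthogonality of the rows is equivalent to the completeness relation you actually need.

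The one genuine hole is that you assert, rather than verify, the normalization: you must check that $\lambda_n\sum_{q=0}^m(q+1)\,c_n(m-q,q)^2=1$ for every $n\le m$, where $\lambda_n=\frac{(2n+1)!}{((n+1)!)^2}\frac{(m-n)!}{(2n+3)_{m-n}}$, i.e., that the known Hahn norm evaluation, combined with your explicit prefactor $\frac{(n+1)!\,(n+1)\,m!}{(2n+1)!\,(m-n)!}$, produces exactly $1$ and not some $m$- or $n$-dependent constant; the whole theorem lives in this constant. This is a finite, standard hypergeometric verification (Pfaff--Saalsch\"utz plus the Hahn orthogonality constant), and it does come out right, but as written your argument would not yet convince a referee. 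The passage from polynomials to general $f\in A^2_{-1,0}(\D^2)$ is routine as you say, since both sides are sums of squares over total-degree blocks and the blocks are orthogonal for both quadratic forms.
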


\subsection{The implementation of the fundamental estimate
into the diagonal norm expansion}
Our starting point is the instance of $(a,b)=(1,0)$ in Theorem 
\ref{thm-fund1}:
\[
\int_\D\big|a(z)\expect\Phi(z)\Psi'(w)\big|^2
\diff A(w)\le |a(z)|^2\log\frac{1}{1-|z|^2},\qquad z\in\D.
\]
We dilate each variable using $r$, $0<r<1$, multiply by $|a(z)|^2$ for some
$a\in H^2(\D)$, and integrate over $\Te\times\D$:
\[
r^2\int_\Te\int_{\D(0,\frac1r)}\big|a(z)\expect\Phi(rz)\Psi'(rw)\big|^2
\diff A(w)\diff s(z)\le \|a\|_{H^2}^2\,\log\frac{1}{1-r^2}.
\]
We now throw away a part of the domain of integration (but, by monotonicity,
we may remove the $r^2$ factor at the same time):
\begin{equation}
\int_\Te\int_{\D}\big|a(z)\expect\Phi(rz)\Psi'(rw)\big|^2
\diff A(w)\diff s(z)\le \|a\|_{H^2}^2\,\log\frac{1}{1-r^2}.
\label{eq-aest1}
\end{equation}
We recognize the left-hand side expression as the norm-square in the space
$A^2_{-1,0}(\D^2)$ of the function $f(z,w)=a(z)\expect\Phi(rz)\Psi'(rw)$.
Clearly,
\[
\oslash(\partial_w^k f)(z)=r^k a(z)\expect\Phi(rz)\Psi^{(k+1)}(rz),
\]
so an application of Theorem \ref{thm-hedshim-DMJ} gives that
\begin{multline}
\sum_{n=0}^{+\infty}
\frac{2(n+2)_n}{n!}\int_\D\bigg|\sum_{k=0}^{n}
\frac{(-1)^k(k+2)_{n-k}\,r^k}{k!(n-k)!(n+k+2)_{n-k}}
\partial_{z}^{n-k}\big(a(z)\expect\Phi(rz)\Psi^{(k+1)}(rz)\big)\bigg|^2
(1-|z|^2)^{2n+1}\diff A(z)
\\
\le \|a\|_{H^2}^2\,\log\frac{1}{1-r^2}.
\label{eq-expanded1}
\end{multline}
We choose for simplicity $a(z)\equiv1$, and expand the higher order
derivative using the Leibniz rule
\[
\partial_{z}^{n-k}\big(\expect\Phi(rz)\Psi^{(k+1)}(rz)\big)=
r^{n-k}\sum_{l=0}^{n-k}\frac{(n-k)!}{l!(n-k-l)!}\expect\Phi^{(n-k-l)}(rz)
\Psi^{(k+l+1)}(rz).
\]
It follows that
\begin{multline}
\sum_{k=0}^{n}\sum_{l=0}^{n-k}\frac{(-1)^k(k+2)_{n-k}\,r^k}{k!(n-k)!(n+k+2)_{n-k}}
\partial_{z}^{n-k}\big(\expect\Phi(rz)\Psi^{(k+1)}(rz)\big)
\\
=r^n\sum_{k=0}^{n}\sum_{l=0}^{n-k}\frac{(-1)^k(k+2)_{n-k}}{k!l!(n-k-l)!(n+k+2)_{n-k}}
\expect\Phi^{(n-k-l)}(rz)\Psi^{(k+l+1)}(rz)
\\
=r^n\sum_{m=0}^{n}\frac{(-1)^m(n+1)[(n-m+1)_m]^2}{m!(m+1)!(n+2)_n}
\big(\expect\Phi^{(n-m)}(rz)\Psi^{(m+1)}(rz)\big)
\label{eq-combid1}
\end{multline}
since it happens to be true for integers $m$ with $0\le m\le n$ that
\begin{equation*}
\sum_{k,l\ge0: k+l=m}\frac{(-1)^k(k+2)_{n-k}}{k!l!(n-m)!(n+k+2)_{n-k}}
=\frac{(-1)^m(n+1)[(n-m+1)_m]^2}{m!(m+1)!(n+2)_n}.
\end{equation*}
As we implement \eqref{eq-combid1} into \eqref{eq-expanded1}, we arrive at
\begin{multline*}
\sum_{n=0}^{+\infty}
%\frac{2\, r^{2n}}{n!(n+2)_n}
\frac{2(n+1)^3\,r^{2n}}{(2n+1)!}\int_\D\bigg|\sum_{m=0}^{n}
\frac{(-1)^m[(n-m+1)_m]^2}{m!(m+1)!}
\big(\expect\Phi^{(n-m)}(rz)\Psi^{(m+1)}(rz)\big)\bigg|^2
(1-|z|^2)^{2n+1}\diff A(z)
\\
\le \log\frac{1}{1-r^2}.
\end{multline*}
If we only keep the first term with $n=0$ on the left-hand side we are left 
with
\begin{equation}
2\int_\D\big|\expect\Phi(rz)\Psi'(rz)\big|^2
(1-|z|^2)\diff A(z)
\le \log\frac{1}{1-r^2}.
\label{eq-PhiPsi1}
\end{equation}
We are free to switch the roles of $\Phi$ and $\Psi$, so that we also have
\begin{equation}
2\int_\D\big|\expect\Phi'(rz)\Psi(rz)\big|^2
(1-|z|^2)\diff A(z)
\le \log\frac{1}{1-r^2}.
\label{eq-PhiPsi2}
\end{equation}
Since 
\[
\partial_z\expect\Phi(rz)\Psi(rz)=
r\expect \Phi'(rz)\Psi(rz)+r\expect\Phi(rz)\Psi'(rz),
\]
it follows from \eqref{eq-PhiPsi1} and \eqref{eq-PhiPsi2} that
\begin{multline}
\int_\D\big|\partial_z\expect\Phi(rz)\Psi(rz)\big|^2
(1-|z|^2)\diff A(z)
\\
\le2r^2\int_\D\big(\big|\expect\Phi(rz)\Psi'(rz)\big|^2+
\big|\expect\Phi'(rz)\Psi(rz)\big|^2\big)(1-|z|^2)\diff A(z)
\le 2r^2\log\frac{1}{1-r^2}.
\label{eq-PhiPsi3}
\end{multline}

\begin{proof}[Proof of Theorem \ref{thm-fund2}]
A variant of the Littlewood-Paley identity states that
for an analytic function $f$ in the Hardy space $H^2(\D)$,
\[
\int_\D|f'(z)|^2(1-|z|^2)\diff A(z)=\int_\Te |f(z)|^2\diff s(z)-
\int_\D|f(z)|^2\diff A(z),
\]
so that with $F(z)=\expect\Phi(rz)\Psi(rz)$, \eqref{eq-PhiPsi3} asserts that
\begin{equation}
\int_\Te |F(rz)|^2\diff s(z)-\int_\D|F(rz)|^2\diff A(z)\le 
2r^2\log\frac{1}{1-r^2}.
\label{eq-formula-F}
\end{equation}
In terms of the Taylor expansion of $F$,
\[
F(z)=\sum_{j=2}^{+\infty}\hat F(j)z^j,
\]
the estimate \eqref{eq-formula-F} amounts to 
\begin{equation}
\sum_{j=2}^{+\infty}\frac{j\,r^{2j}}{j+1}\,|\hat F(j)|^2\le 
2r^2\log\frac{1}{1-r^2}.
\label{eq-formula-F2}
\end{equation}
By integration, we see from \eqref{eq-formula-F2} that
\begin{multline}
\int_\D|F(rz)|^2\diff A(z)=\sum_{j=2}^{+\infty}\frac{r^{2j}}{j+1}\,|\hat F(j)|^2
\le 2\int_0^r\sum_{j=2}^{+\infty}\frac{j\,t^{2j-1}}{j+1}\,|\hat F(j)|^2\diff t
\\
\le 2\int_0^r t\log\frac{1}{1-t^2}\diff t=(1-r^2)\log(1-r^2)+r^2\le r^2.
\label{eq-formula-F2}
\end{multline}
It now follows from \eqref{eq-formula-F} combined with the estimate
\eqref{eq-formula-F2} that
\begin{equation}
\int_\Te |F(rz)|^2\diff s(z)\le 
2r^2\log\frac{1}{1-r^2}+r^2,
\end{equation}
as claimed.
\end{proof}

\section{M\"obius invariance and the mock-Bloch space}

\subsection{M\"obius invariance of the Dirichlet symbol}
For a M\"obius automorphism $\phi$ of the unit disk $\D$, let $\Uop_\phi$ 
and $\Vop_\phi$ be the unitary transformations on $L^2(\D)$ given by 
\eqref{eq-unitaries1.2}. 
% which means
%that 
%\[
%\phi(z)=\gamma\frac{z-\lambda}{1-\bar\lambda z},
%\]
%for some complex numbers $\gamma,\lambda$ with $|\gamma|=1$ and $|\lambda|<1$.
%We associate with such a M\"obius automorphism $\phi$ the unitary operator
%$\Vop_\phi$ on $L^2(\D)$ given by
%\[
%\Vop_\phi[f](z):=\phi'(z)\,(f\circ\phi)(z),\qquad z\in\D.
%\]
If $\phi,\psi$ are two such M\"obius automorphisms, we see that
\[
\Uop_\psi\Uop_\phi f=\Uop_\psi(\phi'(f\circ\phi))=\psi'(\phi'\circ\psi)
(f\circ\phi\circ\psi)=(\phi\circ\psi)'(f\circ\phi\circ\psi)=
\Uop_{\phi\circ\psi}(f),
\] 
which puts us in the context of representation theory. In particular, we find
that $\Uop_\phi^*=\Uop_\phi^{-1}=\Uop_{\phi^{-1}}$. 

\begin{lem}
We have that 
\[
\bar w\,\Uop_\phi^* \szeg_w=\bar\phi(w)\,\szeg_{\phi(w)}
-\bar\phi(0)\,\szeg_{\phi(0)},\qquad w\in\D.
\]
\label{lem-Mob1.1}
\end{lem}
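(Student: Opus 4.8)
The plan is to prove the identity $\bar w\,\Uop_\phi^*\szeg_w = \bar\phi(w)\,\szeg_{\phi(w)} - \bar\phi(0)\,\szeg_{\phi(0)}$ by testing both sides against an arbitrary function in the Bergman space $A^2(\D)$ and exploiting the reproducing property of the Szeg\H{o}-type kernel recorded in the excerpt, namely $\langle f,\szeg_z\rangle_\D = \int_0^1 f(zt)\,\diff t$ for $f\in A^2(\D)$. Since both sides of the claimed identity are, a priori, elements of $L^2(\D)$ whose pairings against all of $A^2(\D)$ we can compute, and since $\Uop_\phi^*=\Uop_{\phi^{-1}}$ maps $A^2(\D)$ to itself, the cleanest route is to verify the equality of the two $L^2(\D)$-functions by checking that they induce the same linear functional on $A^2(\D)$ (and noting that both sides lie in the conjugate-analytic-type span where the pairing determines the element, or equivalently that both sides have the same inner product against every $\bar\szeg$-type element).

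\medskip

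First I would compute the left-hand side pairing. For $f\in A^2(\D)$ we have
\begin{equation*}
\langle f,\bar w\,\Uop_\phi^*\szeg_w\rangle_\D = w\,\langle\Uop_\phi f,\szeg_w\rangle_\D = w\int_0^1(\Uop_\phi f)(wt)\,\diff t = w\int_0^1 \phi'(wt)\,f(\phi(wt))\,\diff t,
\end{equation*}
using the definition $\Uop_\phi f(z)=\phi'(z)\,f\circ\phi(z)$ from \eqref{eq-unitaries1.2} and the averaging formula for $\langle\cdot,\szeg_w\rangle_\D$. Next I would recognize the integrand $w\,\phi'(wt)\,f(\phi(wt))$ as a total derivative in $t$: specifically $\frac{\diff}{\diff t}\,g(\phi(wt))$ where $g$ is an antiderivative of $f$, so the integral telescopes to a boundary difference between $t=1$ and $t=0$, i.e.\ to a difference of values at $\phi(w)$ and $\phi(0)$. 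The main obstacle is that $f\in A^2(\D)$ need not itself be the derivative of another $A^2$ function in a way that keeps everything inside the Bergman framework, so I expect to have to argue this telescoping carefully—either by working first with polynomials $f$, where the antiderivative manipulation is elementary, and then passing to general $f\in A^2(\D)$ by density and continuity of both sides in $f$, or by differentiating a suitable one-parameter family.

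\medskip

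Then I would compute the right-hand side pairing the same way, obtaining
\begin{equation*}
\langle f,\bar\phi(w)\,\szeg_{\phi(w)}-\bar\phi(0)\,\szeg_{\phi(0)}\rangle_\D = \phi(w)\int_0^1 f(\phi(w)t)\,\diff t - \phi(0)\int_0^1 f(\phi(0)t)\,\diff t,
\end{equation*}
and I would again read each integral as a telescoping difference, so that the right-hand side collapses to $G(\phi(w))-G(\phi(0))$ for the antiderivative $G$ of $f$ normalized by $G(0)=0$. Comparing with the telescoped left-hand side, which also yields $G(\phi(w))-G(\phi(0))$, gives equality of the two functionals on a dense subset of $A^2(\D)$, hence everywhere. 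Finally, since both sides of the asserted identity lie in the closed span of $\{\szeg_z\}_{z\in\D}$ (equivalently, their pairings against all of $A^2(\D)$ determine them uniquely in the relevant subspace of $L^2(\D)$), equality of the pairings forces equality of the vectors, completing the proof. The delicate point to watch is the normalization constant: the subtraction of the $\phi(0)$ term is exactly what is needed to cancel the lower limit $t=0$ contribution, so I would double-check that the $G(0)$ terms match on both sides rather than appearing as a spurious additive constant.
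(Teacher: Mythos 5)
Your proof is correct, but it follows a genuinely different route from the paper: the paper's entire proof of Lemma \ref{lem-Mob1.1} is the statement that it is ``a direct computation,'' i.e.\ a pointwise verification --- writing $\Uop_\phi^*=\Uop_{\phi^{-1}}$, so that $\bar w\,(\Uop_\phi^*\szeg_w)(\zeta)=\bar w\,(\phi^{-1})'(\zeta)\,\big(1-\bar w\,\phi^{-1}(\zeta)\big)^{-1}$, and reducing both sides to the same rational function of $\zeta$ via the explicit formula for a M\"obius automorphism (a partial-fraction identity). You instead test both sides against an arbitrary $f\in A^2(\D)$, use the adjoint relation and the averaging formula $\langle f,\szeg_z\rangle_\D=\int_0^1 f(zt)\,\diff t$, and telescope with an antiderivative $G$ of $f$, obtaining $G(\phi(w))-G(\phi(0))$ from both pairings. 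The duality step at the end is legitimate: since $\Uop_{\phi^{-1}}$ preserves holomorphy and $L^2(\D)$-membership, both sides of the asserted identity are elements of $A^2(\D)$ as functions of the suppressed variable, and an element $g\in A^2(\D)$ with $\langle f,g\rangle_\D=0$ for all $f\in A^2(\D)$ vanishes (take $f=g$); so equal pairings force equal functions. What your approach buys is a conceptual explanation of the subtracted term $\bar\phi(0)\,\szeg_{\phi(0)}$ --- it is precisely the basepoint correction matching $G(\phi(0))-G(0)$ --- at the cost of a small functional-analytic superstructure; the paper's computation is shorter but unilluminating.

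One remark: the ``main obstacle'' you flag is not an obstacle. Any $f$ holomorphic on the simply connected domain $\D$ has a holomorphic antiderivative $G$ on $\D$, and the fundamental theorem of calculus applies to $t\mapsto G(\phi(wt))$, which is $C^1$ on $[0,1]$ because $\{\phi(wt):t\in[0,1]\}$ is a compact subset of $\D$. Nothing requires $G$ to belong to $A^2(\D)$; only the pairing formula needs $f$ and $\Uop_\phi f$ in $A^2(\D)$, which they are. So the polynomial-plus-density fallback you sketch, while also workable, is unnecessary.
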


\begin{proof}
This is a direct computation. 
\end{proof}

\begin{proof}[Proof of Theorem \ref{thm-mockbloch}]
In view of the definition of the operator 
$\Tope_\phi=\Uop_\phi\Tope\bar\Uop_\phi^*$, we see that
\begin{equation*}
\oslash W[\Tope_\phi](z)=z^2\langle\Uop_\phi\Tope\bar\Uop_\phi^*\bar\szeg_z,
\szeg_z\rangle_\D=z^2\langle\Tope\bar\Uop_\phi^*\bar\szeg_z,\Uop_\phi^*
\szeg_z\rangle_\D,
\end{equation*}
and by Lemma \ref{lem-Mob1.1}, it follows that
\begin{multline*}
z^2\langle\Tope\bar\Uop_\phi^*\bar\szeg_z,\Uop_\phi^*\szeg_z\rangle_\D
=\phi(z)^2\langle \Tope\bar\szeg_{\phi(z)},\szeg_{\phi(z)}\rangle_\D-\phi(0)\phi(z)
\langle \Tope\bar\szeg_{\phi(z)},\szeg_{\phi(0)}\rangle_\D
\\
-\phi(0)\phi(z)\langle \Tope\szeg_{\phi(0)},\szeg_{\phi(z)}\rangle_\D+
\phi(0)^2\langle \Tope\szeg_{\phi(0)},\szeg_{\phi(0)}\rangle_\D,
\end{multline*}
which is the claimed invariance.
\end{proof}

\subsection{The mock-Bloch space is bigger than the 
Bloch space}
We show that the product of two Dirichlet space functions need not 
be in the Bloch space. 

\begin{proof}[Proof of Theorem \ref{thm-notbloch}]
Let $r_1,r_2,r_3,\ldots$ be a increasing sequence on $]0,1[$ tending rapidly
to $1$. We let $f$ and $g$ be the functions
\[
f(z):=\sum_{j=1}^{+\infty}j^{-1}(1-r_j^2)\frac{z}{1-r_j z},\quad
g(z):=\sum_{j=1}^{+\infty}\frac{j^{-1}}{\sqrt{\log\frac{1}{1-r_j^2}}}
\log\frac{1}{1-r_j z}.
\]
Then 
\[
\|f\|_\nabla^2=\int_\D|f'|^2\diff A=\int_\D\bigg|\sum_{j=1}^{+\infty}j^{-1}
\tfrac{1-r_j^2}{(1-r_j z)^2}\bigg|^2\diff A(z)=\sum_{j,k=1}^{+\infty}(jk)^{-1}
\frac{(1-r_j^2)(1-r_k^2)}{(1-r_j r_k)^2}<+\infty
\]
if the sequence $\{r_j\}_j$ is sparse enough. In a similar manner, 
\[
\|g\|_\nabla^2=\int_\D|g'|^2\diff A=\int_\D\bigg|\sum_{j=1}^{+\infty}
\frac{j^{-1}}{\sqrt{\log\frac{1}{1-r_j^2}}}
\frac{r_j}{1-r_j z}\bigg|^2\diff A(z)=\sum_{j,k=1}^{+\infty}(jk)^{-1}
\frac{\log\frac{1}{1-r_jr_k}}
{\sqrt{\log\frac{1}{1-r_j^2}}\sqrt{\log\frac{1}{1-r_k^2}}}<+\infty
\]
if the sequence is sparse enough. We could require for instance that
simultaneously the following conditions should hold:
\[
\log\frac{1}{1-r_jr_k}\le 2^{-|j-k|}
\sqrt{\log\frac{1}{1-r_j^2}}\sqrt{\log\frac{1}{1-r_k^2}}
\] 
and
\[
\frac{1}{(1-r_jr_k)^2}\le 2^{-|j-k|}\frac{1}{(1-r_j^2)(1-r_k^2)}.
\]
By construction, we have
\[
f'(z)g(z)=\sum_{j,k=1}^{+\infty}(jk)^{-1}\frac{1-r_j^2}{(1-r_j z)^2}
\frac{\log\frac{1}{1-r_k z}}{\sqrt{\log\frac{1}{1-r_k^2}}}, 
\]
so that
\[
(1-r_l^2)f'(r_l)g(r_l)=\sum_{j,k=1}^{+\infty}(jk)^{-1}\frac{1-r_j^2}{(1-r_j r_l)^2}
\frac{\log\frac{1}{1-r_k r_l}}{\sqrt{\log\frac{1}{1-r_k^2}}}
\ge l^{-2}\sqrt{\log\frac{1}{1-r_l^2}} 
\]
which with a sufficiently sparse sequence $\{r_j\}_j$ can be made to tend
to infinity. Since both $f$ and $g$ have nonnegative Taylor coefficients, 
\[
(fg)'(x)=f'(x)g(x)+f(x)g'(x)\ge f'(x)g(x),\qquad 0\le x<1,
\]
so it would follow that 
\[
\|fg\|_{\calB}=\sup_{z\in\D}(1-|z|^2)|(fg)'(z)|\ge \sup_l(1-r_l^2)f'(r_l)g(r_l)
=+\infty.
\]
On the other hand, there is a rank $1$ operator $\Tope$ such that 
$f(z)g(z)=\oslash\Zsymb[\Tope](z)$, so $fg$ definitely belongs to the 
mock-Bloch space $\calB^{\mathrm{mock}}(\D)$.
\end{proof}

\section{Characterization of Dirichlet symbols of Grunsky 
operators}

\subsection{Grunsky operators}
Let $\varphi:\D\to\C$ be a univalent function. In other words, $\varphi$ is
a conformal mapping onto a simply connected domain. The associated 
\emph{Grunsky operator} $\boldG_\varphi$ is given by \eqref{eq-WboldG}, 
and it is 
%the expression
%\[
%\boldG_\varphi f(z):=\int_\D\bigg(\frac{\varphi'(z)\varphi'(w)}
%{(\varphi(z)-\varphi(w))^2}-\frac{1}{(z-w)^2}\bigg)\,f(w)\diff A(w),
%\qquad z\in\D.
%\] 
well-known that $\boldG_\varphi$ is a norm contraction on $L^2(\D)$, 
and that it maps into the Bergman space $A^2(\D)$. This contractiveness is 
usually referred to as the \emph{Grunsky inequalities}, and in this form 
it was studied in, e.g., \cite{BarHed}. Without loss of generality, 
we assume that $\varphi(0)=0$ and $\varphi'(0)=1$. We recall that
the Dirichlet symbol associated with $\boldG_\varphi$ is given by 
\eqref{eq-WboldG}.
%We want to characterize the Dirichlet symbols of the above form 
%\eqref{eq-WboldG} among all Dirichlet symbols $\Wsymb[\Tope](z,w)$ of norm 
%contractions $\Tope$ on $L^2(\D)$. 

\begin{proof}[Proof of Theorem \ref{thm-NLW}]
We first show that any symbol $Q(z,w)=\Wsymb[\boldG_\varphi](z,w)$ for a 
normalized univalent function $\varphi$ has the properties (a) and (b). 
Since $\Wsymb[\boldG_\varphi](z,w)=zw\Zsymb[\boldG_\varphi](z,w)$ it follows
that (a) holds. We note that if $\psi(z):=1/\varphi(1/z)$ and if $\xi:=1/z$,
$\eta:=1/w$, then 
\begin{multline*}
Q(z,w)=\Wsymb[\boldG_\varphi](z,w)=\log\frac{zw(\varphi(z)-\varphi(w))}
{(z-w)\varphi(z)\varphi(w)}
=\log\frac{\xi^{-1}\eta^{-1}(\varphi(\xi^{-1})-\varphi(\eta^{-1}))}
{(\xi^{-1}-\eta^{-1})\varphi(\xi^{-1})\varphi(\eta^{-1})}=
\log\frac{\psi(\xi)-\psi(\eta)}{\xi-\eta}.
\end{multline*}
In other words, 
\[
\psi(\xi)-\psi(\eta)=(\xi-\eta)\,\e^{Q(\xi^{-1},\eta^{-1})},
\]
so that
\begin{multline}
0=\partial_\xi\partial_\eta (\psi(\xi)-\psi(\eta))
=\partial_\xi\partial_\eta\big\{(\xi-\eta)\,\e^{Q(\xi^{-1},\eta^{-1})}\big\}
\\
=\bigg\{\xi^{-2}\partial_z Q(\xi^{-1},\eta^{-1})
-\eta^{-2}\partial_w Q(\xi^{-1},\eta^{-1})+(\xi-\eta)\xi^{-2}\eta^{-2}
\big(\partial_z\partial_w Q(\xi^{-1},\eta^{-1})
\\
+(\partial_zQ(\xi^{-1},\eta^{-1}))
(\partial_zQ(\xi^{-1},\eta^{-1}))\big)
\bigg\}\,\e^{Q(\xi^{-1},\eta^{-1})}.
\label{eq-WE1}
\end{multline}
Changing back to $(z,w)$-coordinates, we obtain that
\begin{equation*}
0=z^2\partial_z Q(z,w)
-w^2\partial_w Q(z,w)+(w-z)zw
\big(\partial_z\partial_w Q(z,w)
+(\partial_zQ(z,w))
(\partial_zQ(z,w))\big),
\end{equation*}
which is the same as
\begin{equation*}
\frac{w^2\partial_w Q(z,w)-z^2\partial_z Q(z,w)}{(w-z)zw}=
\partial_z\partial_w Q(z,w)+(\partial_zQ(z,w))(\partial_zQ(z,w)),
\end{equation*}
that is, property (b). 

We turn to the reverse implication, to show that a holomorphic function $Q$
in $\D^2$ with the properties (a) and (b) is necessarily of the form $\Wsymb
[\boldG_\varphi]$ for some normalized conformal mapping $\varphi$. In view of
the above calculation \eqref{eq-WE1}, condition (b) asserts that 
\[
%0=
\partial_\xi\partial_\eta\big\{(\xi-\eta)\,\e^{Q(\xi^{-1},\eta^{-1})}\big\}=
0
%\partial_\xi\partial_\eta\big\{(\xi-\eta)\,(\e^{Q(\xi^{-1},\eta^{-1})}-1)\big\}
\]
which means that locally in $\D_{\rm e}^2$, 
\[
(\xi-\eta)\,\e^{Q(\xi^{-1},\eta^{-1})}=G_1(\xi)+G_2(\eta),
\]
where $G_1,G_2$ are holomorphic but with possible logarithmic branching at
infinity. Letting $\eta\to\xi$, we find that 
$G_1(\xi)+G_2(\xi)=0$, so that $G_2(\eta)=-G_1(\eta)$. So the above identity 
becomes
\begin{equation}
(\xi-\eta)\,\e^{Q(\xi^{-1},\eta^{-1})}=G_1(\xi)-G_1(\eta).
\label{eq-G1G2}
\end{equation}
We still need to know that $G_1$ is a globally well-defined function in 
$\D_{\mathrm{e}}$ (without logarithmic branching).
We differentiate both sides with respect to 
$\xi$:
\[
G_1'(\xi)=\partial_{\xi}\big((\xi-\eta)\,\e^{Q(\xi^{-1},\eta^{-1})}\big)=
\big\{1-\xi^{-2}(\xi-\eta)\partial_z Q(\xi^{-1},\eta^{-1})\big\}\,
\e^{Q(\xi^{-1},\eta^{-1})}=\e^{Q(\xi^{-1},\xi^{-1})},
\]
where in the last step we plugged in $\eta=\xi$, which is allowed since
the expression is independent of $\eta$. As $|\xi|\to+\infty$, we have
$Q(\xi^{-1},\xi^{-1})=\Ordo(|\xi|^{-2})$, so that $\e^{Q(\xi^{-1},\xi^{-1})}=1+
\Ordo(|\xi|^{-2})$, which rules out a $\xi^{-1}$ term, and hence there is no
logarithmic branching. In addition, we see that $G_1'(\infty)=1$. If we put,
for some constant $c$, $\psi:=G_1+c$, then by \eqref{eq-G1G2}, 
\begin{equation*}
\e^{Q(\xi^{-1},\eta^{-1})}=\frac{\psi(\xi)-\psi(\eta)}{\xi-\eta}.
\end{equation*}
Since the left-hand side is holomorphic and does not vanish in 
$\D_{\mathrm{e}}^2$, it follows that $\psi$ is univalent on $\D_{\mathrm{e}}$. 
But then there must exist a point in the complex plane $\C$ which is not in the
image $\psi(\D_{\mathrm{e}})$, and by adjusting $c$ we can make sure that 
$0\notin \psi(\D_{\mathrm{e}})$. Then winding things backwards we get $\varphi$
from $\psi$ in the above fashion, and $Q(z,w)$ is seen to be of the form
\eqref{eq-WboldG}, as claimed. 
\end{proof}

\section{Zachary Chase's construction of a permutation}

\subsection{Permutation of bases} 
We consider a permutation $\pi:\Z_+\to\Z_+$. We use the permutation to define
that $\beta_j:=\bar\alpha_{\pi(j)}$, which in turn defines the second 
Gaussian process $\Psi(z)$. In this case, the formula \eqref{eq-asvar1.01}
reduces to
\begin{equation}
\int_\Te |\expect\Phi(r\zeta)\Psi(r\zeta)|^2\diff s(\zeta)
=\sum_{l=2}^{+\infty}r^{2l}\bigg(\sum_{j,k:j+k=l}(jk)^{-\frac12}\delta_{j,\pi(k)}\bigg)^2,
\label{eq-asvar1.02}
\end{equation}
where $\delta_{j,k}$ denotes the Kronecker delta, which equals $1$ if $j=k$ 
and $0$ otherwise. Since the sum of Kronecker deltas is squared, it makes 
sense to try to concentrate the times they equal $1$ to certain values of $l$.

\begin{proof}[Proof of Theorem \ref{thm-1.72}]
Let $d\ge 3$ be an integer. We define the permutation $\pi=\pi_d$ in terms
of a disjoint partition into intervals $\Z_+=I_1\cup I_2\cup I_3\cup\ldots$, 
where $I_m$ is an interval on $\Z_+$ which moves toward the right as $m$ 
increases.
On each interval $I_m$ we let $\pi_d$ permute the interval in question. 
The first interval is $I_1:=\{1,\ldots,d-1\}$, and we put $\pi_d(j):=d-j$ for 
$j\in I_1$. 
The second interval is $I_2:=\{d,\ldots,d^2-d\}$, and  we put 
$\pi_d(j):=d^2-j$ for $j\in I_2$. The third interval is 
$I_3:=\{d^2-d+1,\ldots,d^3-d^2+d-1\}$ and on it we put $\pi_d(j):=d^3-j$. 
The fourth interval is $I_4:=\{d^3-d^2+d,\ldots,d^4-d^3+d^2-d\}$, and on it
we put $\pi_d(j):=d^4-j$. The general formula is $\pi_d(j):=d^m-j$ on $I_m$,
but the endpoints of interval $I_m$ depend on whether $m$ is even or odd.
If $m$ is odd, then $m=2n-1$ for some $n=1,2,3,\ldots$, and 
\[
I_m=I_{2n-1}:=\bigg\{\frac{d^{2n-1}+1}{d+1},\ldots,\frac{d^{2n}-1}{d+1}\bigg\} 
\]
while if $m$ is even, then $m=2n$ for some $n=1,2,3,\ldots$, and
\[
I_m=I_{2n}:=\bigg\{\frac{d^{2n}+d}{d+1},\ldots,\frac{d^{2n+1}-d}{d+1}\bigg\}.
\] 
The permutation $\pi_d$ is now well-defined, and we see that for $k\in I_m$,
$\delta_{j,\pi_d(k)}=\delta_{j,d^m-k}=0$ unless $j+k=d^m$. This means that only 
the parameter values $l$ that are powers of $d$ contribute to the sum 
\eqref{eq-asvar1.02}. When $l=d^m$, we find that
\[
\sum_{j,k:j+k=d^m}(jk)^{-\frac12}\delta_{j,\pi_d(k)}=\sum_{j\in I_m}
j^{-\frac12}(d^m-j)^{-\frac12}=\frac{1}{d^m}\sum_{j\in I_m}
\bigg(\frac{j}{d^m}\bigg)^{-\frac12}\bigg(1-\frac{j}{d^m}\bigg)^{-\frac12}=
\int_{\frac{1}{d+1}}^{1-\frac{1}{d+1}}t^{-\frac12}(1-t)^{-\frac12}\diff t+\Ordo(d^{-m+1}),
\]
by thinking of the sum as the Riemann sum of the integral with step length
$d^{-m}$. The integral is the incomplete Beta function, since by symmetry
\[
\int_{\frac{1}{d+1}}^{1-\frac{1}{d+1}}t^{-\frac12}(1-t)^{-\frac12}\diff t
=\pi-2\int_{0}^{\frac{1}{d+1}}t^{-\frac12}(1-t)^{-\frac12}\diff t=
\pi-4(d+1)^{-\frac12}\,
{}_2F_1\big(\tfrac12,\tfrac12;\tfrac32;\tfrac1{d+1}\big),
\]
where the last equality relates it to the standard hypergeometric function.
As it is well-known that
\[
\lim_{r\to1^-}\frac{1}{\log\frac{1}{1-r^2}}
\sum_{m=1}^{+\infty}r^{2d^m}=\frac{1}{\log d},
\]
it follows from the obtained asymptotics that
\[
\lim_{r\to1^-}
\frac{1}{\log\frac{1}{1-r^2}}
\sum_{m=1}^{+\infty}r^{2d^m}
\bigg(\sum_{j,k:j+k=d^m}(jk)^{-\frac12}\delta_{j,\pi_d(k)}\bigg)^2
=\frac{1}{\log d}\,
\Big\{\pi-4(d+1)^{-\frac12}{}_2F_1
\big(\tfrac12,\tfrac12;\tfrac32;\tfrac1{d+1}\big)\Big\}^2.
\]
Finally, choosing $d=29$ gives us the value $\approx1.7208$. This is the 
asymptotic variance of the correlation function $f(z)=\expect\Phi(z)\Psi(z)$ 
with coefficients $\beta_j=\bar\alpha_{\pi_d(j)}$.
\end{proof}

\end{document}